\title{Strong coprimality and strong irreducibility of Alexander polynomials}
\author{Evan M. Bullock and Christopher William Davis}
\date{}                                           
\newtheorem{theorem}{Theorem}[section]
\newtheorem{lemma}[theorem]{Lemma}
\newtheorem{proposition}[theorem]{Proposition}
\newtheorem*{proposition*}{Proposition}
\newtheorem{corollary}[theorem]{Corollary}
\newtheorem*{theorem*}{Theorem}
\newtheorem*{definition*}{Definition}
\theoremstyle{remark}
\newtheorem{remark}[theorem]{Remark}
\newcommand{\Z}{\mathbb{Z}}
\newcommand{\Q}{\mathbb{Q}}
\newcommand{\C}{\mathcal{C}}
\newcommand{\RC}{\mathcal{RC}}
\newcommand{\F}{\mathcal{F}}
\newcommand{\sign}{\operatorname{sign}}
\newcommand{\lnk}{\operatorname{lnk}}
\newcommand{\eref}[1]{(\ref{#1})}
\newcommand{\fun}[3]{#1\!\!:#2\rightarrow #3}
\begin{document}
\maketitle
\begin{abstract}  A polynomial $f(t)$ with rational coefficients is \emph{strongly irreducible} if $f(t^k)$ is irreducible for all positive integers $k$.   Likewise, two polynomials $f$ and $g$ are \emph{strongly coprime} if $f(t^k)$ and $g(t^l)$ are relatively prime for all positive integers $k$ and $l$.  We provide some sufficient conditions for strong irreducibility and prove that the Alexander polynomials of twist knots are pairwise strongly coprime and that most of them are strongly irreducible.  We apply these results to describe the structure of the subgroup of the rational knot concordance group generated by the twist knots and to provide an explicit set of knots which represent linearly independent elements deep in the solvable filtration of the knot concordance group.
\end{abstract}

\section{Introduction}

A knot is an oriented locally flat embedding of $S^1$ into $S^3$.  Modulo slice knots, i.e. knots which bound a locally flat embedding of the 2-disk in the 4-ball, the set of knots forms a group.  This group is called the \emph{knot concordance group} and is denoted $\C$.  In \cite{Le}, Levine defines a surjection from $\C$ to $\Z^\infty\oplus (\Z/2\Z)^\infty \oplus (\Z/4\Z)^\infty$.  Knots in the kernel of this map are called algebraically slice.  The quotient of the knot concordance group by algebraically slice knots is called the algebraic concordance group.  

Levine's work also shows that the Alexander polynomial of a knot, $\Delta_K(t)\in \Z[t]$, fits very well into the theory of algebraic concordance.  If a knot has irreducible Alexander polynomial then it is not algebraically slice.  Moreover, if a pair of knots $J$ and $K$ have coprime Alexander polynomials, then $J\#K$ is algebraically slice if and only if both of $J$ and $K$ are algebraically slice.

Moreover, in \cite{Cha07}, Cha discusses the \emph{rational concordance group} of knots, which we will denote by $\RC$.  A knot is called \emph{rationally slice} if it bounds a disk in a 4-manifold with the rational homology of a ball.  The appropriate replacements for irreducibility and coprimality of  Alexander polynomials in this setting are stronger analogues:  a polynomial $f(t)\in \Z[t]$ is called \textit{strongly irreducible} if $f(t^k)$ is irreducible for all positive integers $k$.  Two polynomials $f(t)$ and $g(t)$ are called \textit{strongly coprime} if for all positive integers $k$ and $l$, the polynomials $f(t^k)$ and $g(t^l)$ are coprime.  

The work of \cite{CHL10}, for example Theorem 7.7, shows that if one wishes to distinguish concordance classes of knots in more subtle cases, such as knots that are algebraically slice, then the notions of strong irreducibility and strong coprimality arise.  The difficulty of actually performing computations in this setting poses a hurdle in the application of their work.   For example, they are able to produce generating sets for infinite rank free abelian groups which sit deeply in the solvable filtration of the knot concordance group, but cannot prove that any one of these generators does not represent the zero element of the concordance group.  In section~\ref{infection}, we apply the algebraic tools of this paper to produce explicit \textit{linearly independent} sets of knots.

In \cite{Cha07}, Cha defines an epimorphism from $\RC$ to $\Z^\infty\oplus (\Z/2\Z)^\infty\oplus (\Z/4\Z)^\infty$.  The quotient of $\RC$ by the kernel of this epimorphism is called the \emph{rational algebraic concordance group}.  In section~\ref{rat conc}, we discuss the manner in which strong coprimality and strong irreducibility fit into this theory.  Specifically we prove that if two knots, $J$ and $K$ have strongly coprime Alexander polynomials, then $J\#K$ is rationally algebraically slice if and only if $J$ and $K$ are rationally algebraically slice.

In Section~\ref{strong coprimality} we prove Theorem~\ref{examples of strong coprimality}, showing that the twist knots, depicted in Figure~\ref{fig:twist knot}, have pairwise strongly coprime Alexander polynomials.  We provide an application to the structure of the group they generate in the rational algebraic concordance group.

\begin{figure}[htbp]
\setlength{\unitlength}{1pt}
\begin{picture}(150,100)
\put(15,0){\includegraphics[width=.25\textwidth]{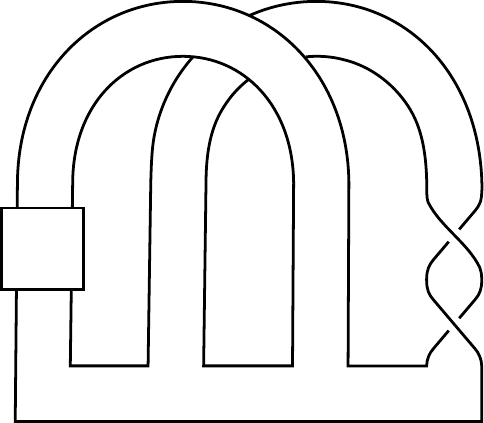}}
\put(21,36){$n$}
\end{picture}
\caption{The $n$-twist knot, $T_n$.}\label{fig:twist knot}
\end{figure}

\begin{theorem*}[Theorem \ref{examples of strong coprimality}]
For all positive integers $m \neq n$,
the Alexander polynomials $\Delta_{T_{n}}$ and $\Delta_{T_{m}}$ are strongly coprime.
\end{theorem*}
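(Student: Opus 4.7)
The roots of $\Delta_{T_n}(t) = nt^2 - (2n+1)t + n$ are $\alpha_n := (2n+1+\sqrt{4n+1})/(2n) > 1$ and its reciprocal $\alpha_n^{-1}$. A common complex root $\beta$ of $\Delta_{T_n}(t^k)$ and $\Delta_{T_m}(t^l)$ must satisfy $\beta^k \in \{\alpha_n, \alpha_n^{-1}\}$ and $\beta^l \in \{\alpha_m, \alpha_m^{-1}\}$; raising to the $kl$-th power and using $\alpha_n,\alpha_m > 1$ forces $\alpha_n^l = \alpha_m^k$. So it suffices to show that $\alpha_n$ and $\alpha_m$ are multiplicatively independent in $\R_{>0}$ whenever $n \neq m$.

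Assume for contradiction that $\alpha_n^l = \alpha_m^k$ with $l,k \geq 1$; dividing the exponents by $\gcd(l,k)$ and taking the positive real $\gcd(l,k)$-th root, we may assume $\gcd(l,k) = 1$. The key intermediate goal is to extract $n^l = m^k$ as an equation of positive integers. The cleanest route is the absolute logarithmic Weil height: setting $\beta_n := n\alpha_n = (2n+1+\sqrt{4n+1})/2$, the Mahler measure of $\Delta_{T_n}$ is $\beta_n$, so $h(\alpha_n) = \tfrac{1}{2}\log \beta_n$ uniformly in whether or not $4n+1$ is a perfect square. The relation $\alpha_n^l = \alpha_m^k$ then forces $l h(\alpha_n) = k h(\alpha_m)$ and hence $\beta_n^l = \beta_m^k$; combining this with the original relation via $\beta_n = n\alpha_n$ yields $n^l = m^k$. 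Because $\gcd(l,k) = 1$, unique factorization produces an integer $c$ with $n = c^k$ and $m = c^l$, and $n \neq m$ forces $c \geq 2$ and $k \neq l$.

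To conclude, let $\eta > 1$ be the positive real with $\eta^k = \alpha_n$ (equivalently $\eta^l = \alpha_m$). Using $\alpha_j + \alpha_j^{-1} = 2 + 1/j$ for $j = n,m$, we obtain
\[
\eta^k + \eta^{-k} \;=\; 2 + c^{-k} \qquad\text{and}\qquad \eta^l + \eta^{-l} \;=\; 2 + c^{-l}.
\]
Thus $F(r) := \eta^r + \eta^{-r} - 2 - c^{-r}$ vanishes at both $r = k$ and $r = l$. But
\[
F'(r) \;=\; (\log \eta)(\eta^r - \eta^{-r}) + (\log c)\, c^{-r},
\]
and both summands are strictly positive on $(0,\infty)$ because $\eta,c > 1$; so $F$ is strictly increasing and cannot vanish at two distinct positive real numbers. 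This contradiction rules out $\alpha_n^l = \alpha_m^k$ and completes the proof.

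I expect the main obstacle to be the passage from the multiplicative relation $\alpha_n^l = \alpha_m^k$ to the integer equation $n^l = m^k$: the height calculation handles this uniformly once one verifies that $h(\alpha_n) = \tfrac{1}{2}\log \beta_n$ in both the irreducible case and the rational case ($n = j(j+1)$, where $\Delta_{T_n}$ splits as $(jt-(j+1))((j+1)t-j)$). An equivalent, more elementary route computes directly that the primitive integer minimal polynomial of $\alpha_n^l$ is $n^l t^2 - S_n^{(l)} t + n^l$, where $S_n^{(r)} = n^r(\alpha_n^r + \alpha_n^{-r})$ satisfies $S_n^{(r+1)} = (2n+1)S_n^{(r)} - n^2 S_n^{(r-1)}$ and hence $S_n^{(r)} \equiv 1 \pmod n$; once that is in hand the monotonicity finish is essentially painless.
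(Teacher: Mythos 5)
Your proof is correct, and it reaches the crucial intermediate step by a genuinely different route than the paper. Both arguments start the same way: a common root yields a multiplicative relation between the large roots ($\alpha_n^l=\alpha_m^k$ in your indexing), and extracting a $\gcd$-th root reduces to coprime exponents. At that point the paper splits into cases: when $(4n+1)(4m+1)$ is not a square it derives a contradiction from the irrationality of $\sqrt{(4n+1)(4m+1)}$, and otherwise it writes $4n+1=a^2D$, $4m+1=b^2D$ and carries out an explicit prime-ideal analysis in $\Z\left[\frac{1+\sqrt{D}}{2}\right]$ (Lemmas~\ref{multiplicity lemma} and~\ref{divisibility lemma}, comparing multiplicities of $P_1$ on the two sides of the cross-multiplied relation) to conclude that $n$ and $m$ are powers of a common integer. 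You bypass both the case division and the ideal-theoretic bookkeeping by comparing Weil heights: from $l\,h(\alpha_n)=k\,h(\alpha_m)$ and the identity $h(\alpha_n)=\frac{1}{2}\log(n\alpha_n)$ --- which, as you flag and as is easily verified, holds both when $\Delta_{T_n}$ is irreducible and when $n=y(y+1)$, where $h\bigl(\frac{y+1}{y}\bigr)=\log(y+1)=\frac{1}{2}\log\bigl((y+1)^2\bigr)$ --- you obtain $n^l=m^k$ in one stroke, hence $n=c^k$, $m=c^l$. This buys brevity and uniformity at the cost of invoking standard height/Mahler-measure facts; your alternative elementary route via the primitive polynomial $n^l t^2-S_n^{(l)}t+n^l$ satisfied by $\alpha_n^l$ is a self-contained substitute, though there the rational case does need a separate word since that quadratic is then reducible and not the minimal polynomial. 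The endgame is the same idea in both treatments --- the relation is impossible over $\R$ once $n=c^k$, $m=c^l$ with $k\neq l$ --- but your packaging via strict monotonicity of $F(r)=\eta^r+\eta^{-r}-2-c^{-r}$ is a clean one-line replacement for the paper's chain of inequalities.
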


It is known that $\Delta_{T_{n}}$ is reducible precisely when $n=y(y+1)$ with $y\in \Z$; in fact, $n$ is of this form if and only if $T_n$ is algebraically slice (see \cite{CG86}).  We go on in Section~\ref{strong irreducibility} to prove that most of the twist knots have strongly irreducible Alexander polynomials.

\begin{theorem*}[Corollary~\ref{twist knots strongly irreducible}]
For every positive integer $n$ that is not a perfect power and not of the form $y(y+1)$ with $y\in \Z$, the Alexander polynomial $\Delta_{T_{n}}$ is strongly irreducible.  When $n$ is a perfect square, $\Delta_{T_n}$ is not strongly irreducible.  \end{theorem*}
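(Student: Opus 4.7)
The Alexander polynomial of the $n$-twist knot is $\Delta_{T_n}(t)=nt^{2}-(2n+1)t+n$, whose discriminant is $4n+1$. Under the hypothesis $n\neq y(y+1)$ this discriminant is not a square, so $\Delta_{T_n}$ is irreducible and its roots $s_{\pm}=\bigl((2n+1)\pm\sqrt{4n+1}\bigr)/(2n)$ generate a real quadratic field $K=\Q(\sqrt{4n+1})$ and satisfy $s_{+}s_{-}=1$. My plan is to reduce strong irreducibility to a $p$-th power question in $K$ via the Vahlen--Capelli theorem, and then to settle that question using Newton polygons at the primes dividing $n$.

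Since $\Delta_{T_n}(t^{k})=n(t^{k}-s_{+})(t^{k}-s_{-})$ has degree $2k$ and $[K:\Q]=2$, a degree count shows that $\Delta_{T_n}(t^{k})$ is irreducible over $\Q$ iff the binomial $t^{k}-s_{+}$ is irreducible over $K$. By Vahlen--Capelli this in turn is equivalent to $s_{+}\notin K^{p}$ for every prime $p\mid k$, together with $-4s_{+}\notin K^{4}$ when $4\mid k$. The second condition is automatic: $K$ is real and $s_{+}>0$, so $-4s_{+}<0$ has no square root in $K$. Strong irreducibility therefore reduces to showing that if $n$ is not a perfect power, then $s_{+}$ is not a $p$-th power in $K$ for any prime $p$.

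For this, fix a prime $\ell\mid n$. From $\gcd(n,4n+1)=1$, the prime $\ell$ is coprime to the squarefree part $d$ of $4n+1$, and since $4n+1\equiv 1\pmod 4$ forces $d\equiv 1\pmod 4$, the prime $\ell$ is unramified in $K$. The Newton polygon of $\Delta_{T_n}$ at $v_{\ell}$ has vertices $(0,v_{\ell}(n))$, $(1,0)$, $(2,v_{\ell}(n))$, so the roots of $\Delta_{T_n}$ in $\overline{\Q_{\ell}}$ have $\ell$-adic valuations $\pm v_{\ell}(n)$. For any prime $\mathfrak p$ of $K$ above $\ell$, the embedding $K\hookrightarrow K_{\mathfrak p}$ sends $s_{+}$ to one of these roots, so $v_{\mathfrak p}(s_{+})=\pm v_{\ell}(n)$. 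If $s_{+}=\beta^{p}$ with $\beta\in K$, then $p$ divides $v_{\mathfrak p}(s_{+})=\pm v_{\ell}(n)$; letting $\ell$ range over all primes dividing $n$ then forces $n$ to be a perfect $p$-th power, contradicting the hypothesis on $n$.

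The second assertion, about perfect squares, is witnessed by the explicit identity $\Delta_{T_{m^{2}}}(t^{2})=(mt^{2}+t-m)(mt^{2}-t-m)$, which one verifies by expansion. I expect the main technical step to be the Newton-polygon computation, and in particular the identification of $v_{\mathfrak p}(s_{+})$ with one of the root valuations $\pm v_{\ell}(n)$; the fact that every prime dividing $n$ is unramified in $K$ is precisely what lets this identification be made uniformly for split and inert primes.
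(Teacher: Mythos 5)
Your argument is correct, but it takes a genuinely different route from the paper's. The paper deduces the first statement from its general criterion Corollary~\ref{strong irreducibility several primes corollary} (a consequence of Theorem~\ref{strong irreducibility theorem using several primes}): for every prime $q$ one exhibits a prime $p\mid n$ at which the Newton polygon of $\Delta_{T_n}$ has an edge of slope $\pm v_p(n)$ in lowest terms with $q\nmid v_p(n)$, and multiplicativity of ramification indices in the tower $\Q_p\subset\Q_p(r)\subset\Q_p(\sqrt[q^h]{r})$ forces $[\Q(\sqrt[q^h]{r}):\Q(r)]=q^h$ for all $h$; the perfect-square statement is exactly the explicit factorization you give. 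You instead exploit the quadratic structure: you reduce irreducibility of $\Delta_{T_n}(t^k)$ over $\Q$ to irreducibility of the binomial $t^k-s_+$ over $K=\Q(\sqrt{4n+1})$ (valid because $\Q(s_+)=K$), invoke Vahlen--Capelli, dispose of the $4\mid k$ condition by positivity in the real field, and rule out $s_+\in K^p$ by noting $v_{\mathfrak p}(s_+)=\pm v_\ell(n)$ for any prime $\mathfrak p$ of $K$ over a prime $\ell\mid n$, which is unramified since $\ell\nmid 4n+1$ and the field discriminant $d\equiv 1\pmod 4$ is odd; that unramifiedness is indeed what lets you conclude $p\mid v_\ell(n)$ rather than $p\mid 2v_\ell(n)$, so your attention to it (including $\ell=2$) is well placed. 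The arithmetic engine is the same in both proofs---the Newton-polygon slopes $\pm v_\ell(n)$ at primes dividing $n$---but your Capelli reduction is tailored to degree two, while the paper's criterion applies to arbitrary irreducible polynomials and is reused elsewhere in the paper; conversely, your argument is shorter in this special case, and the positivity observation is a clean way to handle the prime $q=2$. One cosmetic point: Capelli's extra condition when $4\mid k$ is $s_+\notin -4K^4$ rather than $-4s_+\notin K^4$; both are vacuous here since $K\subset\R$ and $s_+>0$, so nothing in your proof is affected.
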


In order to prove this theorem we develop the following sufficient conditions for strong irreducibility.

\begin{theorem*}[Corollary \ref{degnsimpleeisenstein}]
 Let $f=c_dt^d+\cdots+c_0\in \Z[t]$ be an irreducible polynomial of degree $d$, where the coefficients do not all share a common factor.  Then if some prime $p$ divides $c_d$ or $c_0$ exactly once, $f$ is strongly irreducible.
\end{theorem*}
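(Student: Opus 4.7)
My plan is to combine Capelli's theorem on the irreducibility of $t^n - a$ with a $p$-adic Newton polygon analysis. Let $\alpha$ be a root of $f$ in $\overline{\Q}$; then by Capelli, $f(t^k)$ is irreducible over $\Q$ if and only if $t^k - \alpha$ is irreducible over $\Q(\alpha)$, which in turn holds if and only if $\alpha$ is not a $q$-th power in $\Q(\alpha)$ for any prime $q\mid k$ and $-4\alpha$ is not a fourth power in $\Q(\alpha)$ when $4 \mid k$. Consequently, $f$ is strongly irreducible as soon as I exhibit a single prime $\mathfrak{P}_0$ of $\Q(\alpha)$ with $v_{\mathfrak{P}_0}(\alpha) = 1$: no prime can divide $1$, so $\alpha$ is not a $q$-th power for any prime $q$; and $v_{\mathfrak{P}_0}(-4\alpha) = v_{\mathfrak{P}_0}(4) + 1$ is odd (equal to $1$ when $p \neq 2$ and to $2e_{\mathfrak{P}_0} + 1$ when $p = 2$), hence not divisible by $4$.

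To produce such a $\mathfrak{P}_0$, I first reduce to the case $v_p(c_0) = 1$. When instead $v_p(c_d) = 1$, the reciprocal polynomial $\widetilde f(t) = t^d f(1/t)$ is irreducible and primitive with $v_p(\widetilde c_0) = v_p(c_d) = 1$, and since $\widetilde f(t^k)$ is, up to sign and a power of $t$, the reciprocal of $f(t^k)$, $f$ is strongly irreducible iff $\widetilde f$ is. Under the normalization $v_p(c_0) = 1$, the $p$-adic Newton polygon of $f$ has leftmost vertex $(0,1)$. Primitivity of $f$ forces some coefficient $c_j$ with $v_p(c_j) = 0$, so the polygon descends to height $0$; since vertex heights are nonnegative integers, the leftmost edge must run from $(0,1)$ to $(j_0, 0)$, where $j_0$ is the smallest positive integer with $p \nmid c_{j_0}$. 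This edge has horizontal length $j_0$ and vertical drop $1$, so $\gcd(j_0, 1) = 1$ and the edge has no interior lattice points. Dumas's theorem then identifies this edge with a unique irreducible factor of $f$ over $\Q_p$ of degree $j_0$, totally ramified with ramification index $j_0$ and residue degree $1$. This factor determines a prime $\mathfrak{P}_0$ of $\Q(\alpha)$ above $p$, and following the associated embedding $\Q(\alpha) \hookrightarrow \overline{\Q}_p$ shows that the image of $\alpha$ has $p$-adic valuation $1/j_0$, so that $v_{\mathfrak{P}_0}(\alpha) = e_{\mathfrak{P}_0} \cdot (1/j_0) = 1$, as required.

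The main technical point I expect is the Newton-polygon extraction of a unique totally ramified irreducible factor from a single edge with coprime length and drop; everything else (the Capelli reduction, the reciprocal-polynomial reduction, and the final valuation computations) is essentially bookkeeping. I anticipate that the preceding theorem of the paper is a slightly more general Newton-polygon criterion for strong irreducibility, and that the present corollary is the clean consequence obtained by observing that the hypothesis $v_p(c_0) = 1$ or $v_p(c_d) = 1$, combined with primitivity, automatically produces a Newton-polygon edge of vertical drop exactly $1$.
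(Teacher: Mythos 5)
Your argument is correct, but it gets to the conclusion by a genuinely different route than the paper, even though the two proofs share the same starting point. The identification of the Newton polygon edge from $(0,1)$ to $(j_0,0)$ (and the reduction of the $v_p(c_d)=1$ case to the $v_p(c_0)=1$ case via the reciprocal polynomial) is exactly what the paper does in proving Corollary~\ref{degnsimpleeisenstein}; the divergence is in how ``irreducible for all $k$'' is extracted from that edge. The paper feeds the edge into its Theorem~\ref{Eisenstein criterion for strong irreducibility}, whose proof treats each exponent $x$ separately: it embeds $\Q(r)$ into $\overline{\Q}_p$ using the totally ramified degree-$j_0$ local factor attached to the edge and shows that $e\left(\Q_p(\sqrt[x]{r})/\Q_p\right)$ is a multiple of $j_0x$, forcing $[\Q(\sqrt[x]{r}):\Q(r)]=x$; in the corollary's situation $|b|=1$, so the auxiliary hypothesis of that theorem is vacuous. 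You instead invoke the Vahlen--Capelli theorem to reduce strong irreducibility to the statements that $\alpha$ is not a $q$th power in $\Q(\alpha)$ for any prime $q$ and that $\alpha\notin -4\,\Q(\alpha)^4$, and you verify both at once from the single fact $v_{\mathfrak{P}_0}(\alpha)=1$ at the prime $\mathfrak{P}_0$ of $\Q(\alpha)$ determined by the same local factor; your parity computation correctly disposes of the $-4$ clause, including at $p=2$, and the valuation $1/j_0$ of the local root and the identity $v_{\mathfrak{P}_0}(\alpha)=e_{\mathfrak{P}_0}\cdot(1/j_0)=1$ are right. What each approach buys: yours is shorter and isolates the arithmetic content in one unit valuation, handling all exponents simultaneously, at the cost of citing Capelli (with its fourth-power caveat) as a black box; the paper's is self-contained modulo standard Newton polygon facts, and its Theorem~\ref{Eisenstein criterion for strong irreducibility} is stated for edges of arbitrary vertical drop $|b|$ (with the extra hypothesis that $f(t^{|b|})$ be irreducible), generality the authors actually use elsewhere, e.g.\ for $\Delta_{T_{216}}$. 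Your closing guess --- that the corollary is the $|b|=1$ specialization of a more general Newton-polygon criterion --- is exactly how the paper is organized.
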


\begin{theorem*}[Corollary \ref{strong irreducibility several primes corollary}]  Let $f(t)=c_dt^d+\cdots+c_1 t+c_0\in\Z[x]$, where $c_0$ and $c_1$ are relatively prime non-zero integers.  If $f$ is irreducible and $c_0\not=\pm \alpha^k$ for any integer $\alpha$ and natural number $k>1$, then $f$ is strongly irreducible.
\end{theorem*}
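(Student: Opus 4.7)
The plan is to combine Capelli's theorem, which reduces irreducibility of $f(t^k)$ over $\Q$ to irreducibility of $t^k-\alpha$ over $\Q(\alpha)$ (for $\alpha$ a root of the irreducible $f$), with the classical Vahlen--Capelli criterion stating that $t^n-a$ is irreducible over a field $K$ precisely when $a\notin K^p$ for every prime $p\mid n$ and, when $4\mid n$, additionally $-4a\notin K^4$. Hence strong irreducibility of $f$ reduces to showing that $\alpha$ is not a $p$-th power in $\Q(\alpha)$ for any prime $p$, and that $\alpha\neq -4\gamma^4$ for any $\gamma\in\Q(\alpha)$. Both will follow from a $q$-adic Newton polygon calculation at primes $q$ dividing $c_0$.

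Fix any prime $q$ with $q\mid c_0$. Since $\gcd(c_0,c_1)=1$ forces $v_q(c_1)=0$, the $q$-adic Newton polygon of $f$ begins with a length-$1$ segment from $(0,v_q(c_0))$ to $(1,0)$; therefore $f$ has exactly one root $\alpha_\star\in\overline{\Q_q}$ with $v_q(\alpha_\star)=v_q(c_0)$, every other root having strictly smaller $q$-adic valuation. Now suppose $\alpha=\beta^p$ for some prime $p$ and some $\beta\in\Q(\alpha)$. Then $\Q(\beta)=\Q(\alpha)$ has degree $d$, so the minimal polynomial of $\beta$ has degree $d$, and by Gauss's lemma admits a primitive representative $G\in\Z[t]$ dividing $f(t^p)$ in $\Z[t]$. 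The roots of $G$ are the $\Q$-conjugates of $\beta$, in bijection with the roots of $f$ via $\beta_i\mapsto \beta_i^p$, so the root $\beta_\star$ of $G$ satisfying $\beta_\star^p=\alpha_\star$ is the unique root of $G$ in $\overline{\Q_q}$ of valuation $v_q(c_0)/p$. Hence the $q$-adic Newton polygon of $G$ contains a length-$1$ segment of slope $-v_q(c_0)/p$; because $G\in\Z[t]$, its Newton polygon has vertices in $\Z^2$, so the vertical drop $v_q(c_0)/p$ across this length-$1$ segment is forced to be an integer, i.e., $p\mid v_q(c_0)$.

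As this conclusion holds for every prime $q\mid c_0$, we obtain $c_0=\pm u^p$ for some $u\in\Z$, contradicting the hypothesis that $c_0$ is not $\pm$ a $k$-th power for any integer $k>1$. This settles the first condition. For the $-4\gamma^4$ condition, observe that if $\alpha=-4\gamma^4$ then $\eta:=2\gamma^2\in\Q(\alpha)$ satisfies $\eta^2=-\alpha$, making $\eta$ a root of $f(-t^2)$, whose constant and $t^2$-coefficients $c_0$ and $-c_1$ are still coprime. Rerunning the Newton polygon argument using the primitive integer divisor of $f(-t^2)$ arising from the minimal polynomial of $\eta$ yields $2\mid v_q(c_0)$ for every $q\mid c_0$, so $c_0=\pm u^2$, again contradicting the hypothesis.

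The main obstacle I anticipate is the careful bookkeeping in the Newton polygon step: verifying that the unique first-segment root of $f$ in $\overline{\Q_q}$ really does correspond to a length-exactly-one segment in the Newton polygon of the primitive integer factor $G$ (so that no other $G$-root shares the valuation $v_q(c_0)/p$), and correctly handling the passage from the rational minimal polynomial of $\beta$ (respectively $\eta$) to its primitive integer representative via Gauss's lemma.
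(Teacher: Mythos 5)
Your proof is correct, but it takes a genuinely different route from the paper. The paper deduces this corollary from its general several-primes criterion (Theorem~\ref{strong irreducibility theorem using several primes}): after reducing to prime-power exponents $q^h$, it embeds $\Q(r)$ into $\overline{\Q_p}$, uses the length-one edge $((0,e_i),(1,0))$ to see that $\Q_p(r)/\Q_p$ is totally ramified of degree $a$, and then forces $[\Q_p(\sqrt[q^h]{r}):\Q_p(r)]$ to be a multiple of $q^h$ by computing $v_p(\sqrt[q^h]{r})=b/(aq^h)$ and using multiplicativity of ramification indices. You instead reduce, via the Capelli composition theorem, to irreducibility of $t^k-\alpha$ over $\Q(\alpha)$, invoke the Vahlen--Capelli criterion, and then kill the two obstructions ($\alpha\in\Q(\alpha)^p$ and $\alpha\in-4\Q(\alpha)^4$) by an integrality argument: since $\Q(\beta)=\Q(\alpha)$, the roots of the primitive integral minimal polynomial of $\beta$ (resp.\ $\eta=2\gamma^2$) biject with the roots of $f$ under the power map, so exactly one of them attains the maximal valuation $v_q(c_0)/p$ (resp.\ $v_q(c_0)/2$), and the resulting length-one initial edge of a Newton polygon with lattice vertices forces $p\mid v_q(c_0)$ at every $q\mid c_0$, contradicting that $c_0$ is not $\pm$ a perfect power (the case $c_0=\pm1$ is also excluded by the hypothesis, as you implicitly need). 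Your bookkeeping worries are handled correctly: the unique maximal-valuation root does force the first edge to end at abscissa $1$ (in particular the linear coefficient of $G$ is nonzero), and Gauss's lemma legitimately supplies the primitive integral representative. What each approach buys: yours is more elementary and self-contained (no ramification theory, only valuations of roots, Gauss's lemma, and a classical criterion for $x^n-a$), and it cleanly isolates the $4\mid k$ subtlety that the paper's ramification argument never needs to confront; the paper's route, at the cost of $p$-adic field theory, yields the more general edge criterion of Theorem~\ref{strong irreducibility theorem using several primes} (and the related Theorem~\ref{Eisenstein criterion for strong irreducibility}), which applies to Newton-polygon edges of arbitrary length rather than only to the coprime $c_0,c_1$ configuration. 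One tiny remark: the coprimality of $c_0$ and $-c_1$ in $f(-t^2)$ plays no role in your second step --- the bijection of conjugates of $\eta$ with roots of $f$ already gives the unique maximal-valuation root --- so that sentence can be dropped.
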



\section{applications to rational knot concordance}\label{rat conc}

%

In \cite{Cha07}, Jae Choon Cha defines the rational algebraic concordance group of knots in an analogous manner to the definition of the algebraic concordance group in \cite{Le}.  Cha defines a complete set of invariants for the rational algebraic concordance group by taking direct limits of Levine's complete set of invariants of algebraic concordance.  In this section we prove that these invariants have a splitting property for knots with strongly coprime Alexander polynomials.

An algebraic number $z$ is called \emph{reciprocal} if $z$ and $z^{-1}$ are roots the same irreducible polynomial over $\Q$.  Levine's invariants of algebraic concordance $s$, $e$ and $d$ are defined as follows:
\begin{itemize}
\item For $z$ a reciporical number with $|z|=1$, $s_z(K)\in \Z$ is the jump of the Tristram-Levine signature function at $z$.
\item For $z$ a reciprocal number, $e_z(K)\in \Z/2\Z$ is the number of times that the irreducible polynomial of $z$ divides the Alexander polynomial of $K$, reduced mod 2.
\item For $z$ a reciprocal number, $d_z(K)\in\frac{\Q(z+z^{-1})^\times}{\{u\overline u|~u\in \Q(z)^\times\}}$ is the discriminant of the $z$-primary part of a Seifert matrix of $K$.
\end{itemize}

All of these invariants have the property that they vanish when the Alexander polynomial of $K$ is relatively prime to the irreducible polynomial of $z$ \cite[Proposition 3.6 (1)]{Cha07}.  The invariants $s$ and $e$ are additive under connected sum, while $d_z(K\#J)=(-1)^{e_z(K)e_z(J)}d_z(K)d_z(J)$ \cite[Proposition 3.6 (4)]{Cha07}.

Let $P$ denote the set of sequences $(a_k)_{k=1}^\infty$ of reciporical numbers such that $(a_{nk})^n=a_k$ for all $n, k$.  Let $P_0$ be the subset of $P$ given by adding the restriction that $|a_k|=1$ for all $k$.  For $a=(a_k)\in P_0$ and $b=(b_k)\in P$, Cha defines $s_a(K)=(s_{a_k}(K))_{k=1}^\infty$, $e_b(K)=(e_{b_k}(K))_{k=1}^\infty$ and $d_b(K)=(d_{b_k}(K))_{k=1}^\infty$.  These form a complete set of invariants for rational algebraic concordance \cite[Theorem 3.13]{Cha07}.

Two polynomials $f(t), g(t)\in \Z[t]$ are called \textit{strongly coprime} if for every pair of integers $k$ and $l$, the polynomials $f(t^k)$ and $g(t^l)$ have no common roots.  We now prove a connection between rational algebraic concordance and the condition of strong coprimality.

\begin{proposition}\label{coprime to rat alg conc}
If two knots $J$ and $K$ have strongly coprime Alexander polynomials, then $J\#K$ is rationally algebraically slice if and only if both of $J$ and $K$ are.
\end{proposition}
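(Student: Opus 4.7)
The plan is to handle the two implications separately. The ``if'' direction is immediate: if both $J$ and $K$ are rationally algebraically slice, all their invariants $s_a$, $e_b$, $d_b$ are trivial, and by additivity of $s$ and $e$ together with the connected-sum formula $d_z(J\#K) = (-1)^{e_z(J)e_z(K)}d_z(J)d_z(K)$, so are those of $J\#K$.

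For the converse, the key step is a splitting lemma: for every sequence $a=(a_k)\in P$, either $\Delta_J(a_k)\neq 0$ for every $k$, or $\Delta_K(a_k)\neq 0$ for every $k$. I would prove this by contradiction. If $\Delta_J(a_k)=0$ and $\Delta_K(a_l)=0$ for some $k$ and $l$, then applying the defining relation $(a_{nm})^n=a_m$ of $P$ with $(n,m)=(l,k)$ and $(n,m)=(k,l)$ gives $a_{kl}^l=a_k$ and $a_{kl}^k=a_l$. Hence $a_{kl}$ is a common root of $\Delta_J(t^l)$ and $\Delta_K(t^k)$, so the minimal polynomial of $a_{kl}$ over $\Q$ is a common rational factor of these two polynomials, contradicting strong coprimality of $\Delta_J$ and $\Delta_K$.

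Granted this lemma, by the vanishing property of \cite[Proposition 3.6(1)]{Cha07}, for each $a\in P$ (or $P_0$) at least one of $J$, $K$ has all of $s_{a_k}$, $e_{a_k}$, $d_{a_k}$ trivial for every $k$. Assuming $J\#K$ is rationally algebraically slice, additivity of $s$ and $e$ forces $s_a$ and $e_a$ to vanish separately on both $J$ and $K$; once $e_{a_k}(J)$ and $e_{a_k}(K)$ are both known to vanish, the sign in the connected-sum formula for $d$ disappears and ordinary multiplicativity forces $d_{a_k}(J)=d_{a_k}(K)=1$ as well. Thus Cha's full set of invariants is trivial on both knots, and \cite[Theorem 3.13]{Cha07} concludes.

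The main obstacle is establishing the splitting lemma and, in particular, exploiting the compatibility $(a_{nk})^n=a_k$ in the right way. The remainder is careful bookkeeping with additive invariants, with the minor subtlety that which of $J$ or $K$ carries the trivial invariants can depend on $a$, so the additivity step must be run for $J$ and for $K$ individually rather than by fixing a single case throughout.
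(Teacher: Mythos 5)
Your proposal is correct and follows essentially the same route as the paper: the heart in both cases is using the compatibility relation $(a_{nk})^n=a_k$ together with strong coprimality to produce a common root of $\Delta_J(t^l)$ and $\Delta_K(t^k)$, so that along any sequence in $P$ at most one of the two knots can have nonvanishing invariants, and then invoking additivity of $s$, $e$ (and the $d$ product formula). The only difference is cosmetic — you argue directly that all invariants of both knots vanish and cite completeness, while the paper runs the contrapositive, transferring a nonzero invariant of $K$ to $J\#K$.
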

\begin{proof}  It follows immediately from the additivity (up to sign) of $s$, $e$ and $d$ that if both $J$ and $K$ are rationally algebraically slice, then so is $J\#K$.  Assume now that $K$ is not rationally algebraically slice.  Then there exists some $a=(a_k)\in P$ such that one of $s_a(K)$, $e_a(K)$ or $d_a(K)$ is nonzero, so for some term $a_n$  in the sequence $a$, one of $s_{a_n}(K)$, $e_{a_n}(K)$ or $d_{a_n}(K)$ is nonzero.   This implies that $a_n$ is a root of $\Delta_K(t)$, the Alexander polynomial of $K$.  If one of  $s_{a}(J)$, $e_{a}(J)$ or $d_{a}(J)$ is nonzero then similarly $a_m$ is a root of $\Delta_J(t)$ for some $m$.  Then $(a_{mn})^m=a_n$ so that $a_{mn}$ is a root of $\Delta_K(t^m)$.  Similarly, $a_{mn}$ is a root of $\Delta_J(t^n)$, contradicting the assumption that $\Delta_K$ and $\Delta_J$ are strongly coprime.  Thus, it must be that $s_{a_n}(J)$, $e_{a_n}(J)$ and $d_{a_n}(J)$ vanish.

By additivity, we have $s_a(J\#K)=s_a(K)$,  $e_a(J\#K)=e_a(K)$, and  $d_a(J\#K)= d_a(K)$.  By assumption, one of these is nonzero, so we conclude that $J\#K$ is not rationally algebraically slice.
\end{proof}

\begin{corollary}\label{injection from algebraic to rational algebraic}
If knots $K_1, K_2,\dots $ have strongly irreducible Alexander polynomials that are distinct up to substitutions $f(t)\mapsto \pm f(t^k)$, then the map from the algebraic concordance group to the rational algebraic concordance group in injective on their span.
\end{corollary}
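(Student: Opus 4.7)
The plan is to leverage Proposition~\ref{coprime to rat alg conc} by first upgrading the hypothesis to pairwise strong coprimality, then stripping off each summand via that proposition, and finally invoking Cha's sequences to convert rational algebraic sliceness to algebraic sliceness in the single-knot case. First I would verify the pairwise strong coprimality of the $\Delta_{K_i}$. Fix $i\neq j$ and suppose, toward a contradiction, that $\Delta_{K_i}(t^a)$ and $\Delta_{K_j}(t^b)$ have a common root for some positive integers $a,b$. By strong irreducibility, both polynomials are irreducible in $\Q[t]$, so two irreducible polynomials sharing a root must be associates: $\Delta_{K_i}(t^a)=c\,\Delta_{K_j}(t^b)$ for some $c\in\Q^\times$. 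Evaluating at $t=1$ and using the knot normalization $\Delta_K(1)=\pm 1$ forces $c=\pm 1$, which places $\Delta_{K_i}$ and $\Delta_{K_j}$ in the same equivalence class under the substitutions $f(t)\mapsto\pm f(t^k)$, contradicting the hypothesis.

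Now let $L=\sum_{i=1}^N n_iK_i$ lie in the kernel of the map on the span, so $L$ is rationally algebraically slice. Write $L=(n_1K_1)\#\cdots\#(n_NK_N)$; the Alexander polynomial of $n_iK_i$ is $\Delta_{K_i}^{|n_i|}$ (up to units), and strong coprimality is preserved under taking powers and products, so the polynomials $\Delta_{n_iK_i}$ are pairwise strongly coprime. An induction on $N$ using Proposition~\ref{coprime to rat alg conc} at each step then shows that every summand $n_iK_i$ is itself rationally algebraically slice.

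To finish, I would prove that any knot $M$ whose Alexander polynomial is a power $p^m$ of a strongly irreducible polynomial $p$ must be algebraically slice once it is rationally algebraically slice. Fix a root $z$ of $p$ and an embedding $\overline{\Q}\hookrightarrow\cplx$, and set $a_k=|z|^{1/k}e^{i\arg(z)/k}$, so that $a_1=z$ and $(a_{nk})^n=a_k$. Strong irreducibility makes $p(t^k)$ irreducible, and reciprocity of $p$ makes $p(t^k)$ reciprocal; hence $p(t^k)$ is the minimal polynomial of $a_k$, and because a reciprocal irreducible polynomial has its roots closed under inversion, $a_k^{-1}$ is a Galois conjugate of $a_k$, so $a_k$ itself is reciprocal. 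Thus $a=(a_k)\in P$, and $a\in P_0$ when $|z|=1$. Rational algebraic sliceness of $M$ forces $e_a(M)=d_a(M)=0$ and, when applicable, $s_a(M)=0$; reading off first coordinates gives $e_z(M)=d_z(M)=0$ and, when $|z|=1$, $s_z(M)=0$. Since $z$ ranges over all roots of $p$---the full root set of $\Delta_M$---and Levine's invariants vanish automatically off this set, Levine's complete collection of invariants vanishes on $M$, so $M$ is algebraically slice. Applying this to each $M=n_iK_i$ and noting that algebraic sliceness is closed under connected sum, $L$ is algebraically slice, giving the injectivity on the span.

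The main obstacle is the third paragraph: constructing a compatible sequence of \emph{reciprocal} algebraic numbers starting from a prescribed root of $p$. It is precisely the strong irreducibility of $p$---rather than merely irreducibility---that makes every $p(t^k)$ irreducible and thereby forces each $k$-th root of $z$ to inherit reciprocity from $p$; mere irreducibility of $p$ would leave open the possibility that $p(t^k)$ factors into non-reciprocal pieces and so would not guarantee that $a_k$ is reciprocal.
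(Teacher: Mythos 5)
Your proposal is correct and takes essentially the same approach as the paper: strip off the summands using Proposition~\ref{coprime to rat alg conc} (after observing that strong irreducibility plus the distinctness-up-to-substitution hypothesis gives pairwise strong coprimality), and then relate Cha's invariants to Levine's via a compatible sequence of $k$th roots of a root $z$ of the Alexander polynomial, whose reciprocity is exactly what irreducibility of $\Delta_{K_j}(t^k)$, i.e.\ strong irreducibility, guarantees. The only difference is bookkeeping: the paper phrases the final step as preservation of the order (infinite, two, or four) of each $K_j$ using $s$, $e$, $d$ separately, while you show directly that all of Levine's invariants of each summand $n_iK_i$ vanish and then invoke their completeness.
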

\begin{proof}
Since distinct strongly irreducible polynomials are strongly coprime, Proposition~\ref{coprime to rat alg conc} implies that the only way a linear combination $\underset{j=1}{\overset{n}{\#}}c_jK_j$ with $c_j\in \Z$ can be rationally algebraically slice is if $c_jK_j$ is rationally algebraically slice for all $j$.  

Now, we note that $K_j$ has infinite order in the algebraic concordance group if and only if $s_z(K_j)$ is nonzero for some reciprocal number $z$ with $|z|=1$.  By \cite[Proposition 6.1 (1)]{Cha07}, $z$ must be a root of $\Delta_{K_j}(t)$.  Let $a=(a_n)=(\sqrt[n]{z})$ be any compatible sequence of $n$th roots of $z$.  Then each $a_n$ is reciprocal, since $a_n$ and $a_n^{-1}$ are both roots of the same irreducible polynomial $\Delta_{K_j}(t^n)$.  Thus,  $a\in P_0$ and certainly $s_a(K_j)$ is nonzero, since its first entry is nonzero.  Since $s$ is a homomorphism to a torsion-free group it follows that $K_j$ is of infinite order in the rational algebraic concordance group.

The proof in the case that $K_j$ is of order two or four is the same, but uses the invariants $e$ and $d$ in place of $s$.
\end{proof}

%
%
%

\section{Strong coprimality}\label{strong coprimality}

Recall that the Alexander polynomial of the $n$-twist knot $T_n$ is \[\Delta_{T_{n}}(t) = nt^2-(2n+1)t+n.\]
In this section, we prove the following theorem:

\begin{theorem}\label{examples of strong coprimality}
For positive integers $m \neq n$,
the Alexander polynomials $\Delta_{T_{n}}$ and $\Delta_{T_{m}}$ are strongly coprime.
\end{theorem}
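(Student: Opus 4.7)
The plan is to reduce the statement to a multiplicative-independence claim about two real algebraic numbers, and then dispose of that claim by converting it to a single rational identity involving Chebyshev-like polynomials. Let $\alpha_n>1$ denote the larger real root of $\Delta_{T_n}$, so the two roots are $\alpha_n$ and $\alpha_n^{-1}$ (the polynomial being reciprocal). If $\zeta$ is a common root of $\Delta_{T_n}(t^k)$ and $\Delta_{T_m}(t^l)$, then $\zeta^k\in\{\alpha_n^{\pm 1}\}$ and $\zeta^l\in\{\alpha_m^{\pm 1}\}$, so $\zeta^{kl}$ is simultaneously $\alpha_n^{\pm l}$ and $\alpha_m^{\pm k}$. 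Since $\alpha_n,\alpha_m>1$, the two signs must agree, so the problem reduces to ruling out $\alpha_n^a=\alpha_m^b$ for $n\neq m$ and positive integers $a,b$.

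From $\Delta_{T_n}(\alpha_n)=0$ one reads $\alpha_n+\alpha_n^{-1}=2+1/n$. Let $T_k\in\Z[x]$ be the monic polynomial of degree $k$ determined by $T_k(x+x^{-1})=x^k+x^{-k}$. Assuming $\alpha_n^a=\alpha_m^b$ and adding the reciprocal identity $\alpha_n^{-a}=\alpha_m^{-b}$ yields the trace identity
\[
T_a(2+1/n)\;=\;T_b(2+1/m).
\]
Now introduce $R_k(x):=x^k\bigl(T_k(2+1/x)-2\bigr)\in\Z[x]$. A direct expansion $T_k(2+y)-2=\sum_{j=1}^k c_{k,j}y^j$ shows $R_k(x)=\sum_{j=1}^k c_{k,j}x^{k-j}$ has degree $k-1$, leading coefficient $c_{k,1}=k^2$, and, crucially, constant term $c_{k,k}=1$ (this is the leading coefficient of the monic $T_k$). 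Clearing denominators in the trace identity gives $m^bR_a(n)=n^aR_b(m)$. Because $R_a(n)\equiv 1\pmod{n}$ and $R_b(m)\equiv 1\pmod{m}$, divisibility forces $n^a\mid m^b$ and $m^b\mid n^a$, hence $n^a=m^b$.

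This is a surprisingly rigid conclusion. Writing $d=\gcd(a,b)$ and $a=da'$, $b=db'$ with $\gcd(a',b')=1$, we get $n^{a'}=m^{b'}$, and so $n=c^{b'}$ and $m=c^{a'}$ for a unique positive integer $c$. The hypothesis $n\neq m$ forces $a'\neq b'$ and $c\geq 2$; without loss of generality $a'<b'$. The trace identity becomes
\[
T_{da'}\!\bigl(2+1/c^{b'}\bigr)\;=\;T_{db'}\!\bigl(2+1/c^{a'}\bigr).
\]
But $T_k(2\cosh u)=2\cosh(ku)$ is strictly increasing in each of $u>0$ and $k\geq 1$, so monotonicity in $u$ (using $1/c^{b'}<1/c^{a'}$) and in $k$ (using $da'<db'$) gives
\[
T_{da'}\!\bigl(2+1/c^{b'}\bigr)\;<\;T_{da'}\!\bigl(2+1/c^{a'}\bigr)\;<\;T_{db'}\!\bigl(2+1/c^{a'}\bigr),
\]
contradicting the equality.

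The main obstacle I anticipate is spotting the constant-term-$1$ property of $R_k$: once this is in hand the rigid conclusion $n^a=m^b$ falls out for free, and the rest is elementary monotonicity. A pleasant feature of this approach is that it handles all cases uniformly, with no need to split according to whether the roots $\alpha_n,\alpha_m$ are rational, lie in a common quadratic field, or lie in distinct quadratic fields.
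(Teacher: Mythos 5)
Your argument is correct, and it takes a genuinely different route from the paper's. After the common first reduction (a common root of $\Delta_{T_n}(t^k)$ and $\Delta_{T_m}(t^l)$ forces $\alpha_n^a=\alpha_m^b$ for the real roots $\alpha_n,\alpha_m>1$), the paper splits into cases: it first disposes of the case where $(4n+1)(4m+1)$ is not a square by an irrationality argument, and otherwise works in the real quadratic order $\Z\bigl[\tfrac{1+\sqrt{D}}{2}\bigr]$, factoring $\langle p\rangle$ into prime ideals for each prime $p$ dividing $n$ and matching valuations on both sides of the relation to conclude $n=\alpha^l$, $m=\alpha^k$, before finishing with a real inequality. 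You instead pass to the trace identity $T_a(2+1/n)=T_b(2+1/m)$ and exploit that the integer polynomials $R_k(x)=x^k\bigl(T_k(2+1/x)-2\bigr)$ have constant term $1$ (so $R_a(n)\equiv 1\pmod n$, $R_b(m)\equiv 1\pmod m$), whence ordinary divisibility in $\Z$ applied to $m^bR_a(n)=n^aR_b(m)$ forces $n^a=m^b$ directly; extracting $\gcd(a,b)$ then gives $n=c^{b'}$, $m=c^{a'}$, and your closing monotonicity step is essentially the paper's ``infinite prime'' inequality in trace form. I checked the key points: $T_k(2)=2$ so $R_k\in\Z[x]$ with constant term the leading (monic) coefficient of $T_k$; coprimality of $R_a(n)$ with $n^a$ (not just $n$) does follow since both have the same prime support as $n$; positivity of $R_a(n)$, $R_b(m)$ is automatic since $T_a(2+1/n)>2$; and the sign bookkeeping in the first reduction is right because all the quantities compared are positive reals on opposite sides of $1$. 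What your approach buys is uniformity and elementarity: no case split on whether the roots are rational or lie in a common quadratic field, and no Dedekind-domain machinery (ideal norms, unique factorization of ideals); what the paper's approach exhibits is the explicit prime-by-prime valuation matching in the ring of integers, at the cost of more machinery. Two cosmetic remarks: your $T_k$ clashes with the paper's notation $T_n$ for twist knots, so rename it if this is to sit alongside the paper; and the value $c_{k,1}=k^2$ is never used, only $c_{k,k}=1$.
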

\begin{corollary}
If some linear combination $\underset{j=1}{\overset{m}{\#}}c_jT_j$ is rationally algebraically slice, then each $c_jT_j$ is rationally algebraically slice.
\end{corollary}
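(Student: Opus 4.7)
The plan is to reduce the corollary to a direct iteration of Proposition~\ref{coprime to rat alg conc}, using Theorem~\ref{examples of strong coprimality} as the source of strong coprimality. I proceed by induction on $m$, the number of distinct twist knots appearing in the linear combination. The base case $m=1$ is vacuous, so assume the result holds for fewer than $m$ summands.

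Set $K=\underset{j=1}{\overset{m-1}{\#}}c_jT_j$ and $L=c_mT_m$, so the given knot is $K\# L$. The Alexander polynomial is multiplicative under connected sum (up to units), and reversal of orientation does not affect the Alexander polynomials of twist knots since $\Delta_{T_n}(t)=nt^2-(2n+1)t+n$ is reciprocal. Hence $\Delta_K$ is, up to a unit, $\prod_{j=1}^{m-1}\Delta_{T_j}^{|c_j|}$ and $\Delta_L$ is $\Delta_{T_m}^{|c_m|}$.

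The key observation is that strong coprimality passes to products and to powers: if $f$ is strongly coprime to each of $g_1,\dots,g_r$, then any common root of $f(t^k)$ and $\prod_i g_i(t^l)$ would be a common root of $f(t^k)$ and some $g_i(t^l)$, and raising to a positive power does not create new roots. By Theorem~\ref{examples of strong coprimality}, $\Delta_{T_m}$ is strongly coprime to each $\Delta_{T_j}$ for $j<m$, so $\Delta_L$ is strongly coprime to $\Delta_K$. Proposition~\ref{coprime to rat alg conc} then forces both $K$ and $L=c_mT_m$ to be rationally algebraically slice. Applying the inductive hypothesis to $K$ finishes the argument.

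There is no real obstacle here beyond bookkeeping: the only substantive ingredients are Theorem~\ref{examples of strong coprimality} and Proposition~\ref{coprime to rat alg conc}, and the small extra verification that strong coprimality of irreducible polynomials extends to strong coprimality of arbitrary positive-power products of them.
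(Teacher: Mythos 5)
Your proposal is correct and follows the same route as the paper, which simply cites Theorem~\ref{examples of strong coprimality} together with Proposition~\ref{coprime to rat alg conc} and treats the iteration as immediate. Your induction and the verification that pairwise strong coprimality of the $\Delta_{T_j}$ persists under taking products and powers (so that $\Delta_{c_mT_m}$ and $\Delta_{\#_{j<m}c_jT_j}$ are strongly coprime) is exactly the bookkeeping the paper leaves implicit.
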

\begin{proof}This follows immediately from Theorem~\ref{examples of strong coprimality} and Proposition~\ref{coprime to rat alg conc}.
\end{proof}

\begin{proof}[Proof of Theorem~\ref{examples of strong coprimality}]

By the quadratic formula, the roots of $\Delta_{T_{n}}(t)$ are given by $r_n$ and $1/r_n$, where $r_{n} = \frac{2n+1+\sqrt{4n+1}}{2n}$.

If these polynomials were not strongly coprime, then for nonzero integers $k,l$, \begin{equation}\label{isogeny}
r_{n}^k = r_{m}^l.
\end{equation}

Since $r_n>1$, it must be that $\sign(k)=\sign(l)$, from here on we assume both are positive.  If $k$ and $l$ had a common factor $d$, we could take the positive real $d$th root of both sides of this equation.   We thus may assume that $k$ and $l$ are relatively prime.  The proof now proceeds by cases.

\begin{lemma}
If $(4n+1)(4m+1)$ is not the square of an integer, then $\Delta_{T_{n}}$ and $\Delta_{T_{m}}$ are strongly coprime.
\end{lemma}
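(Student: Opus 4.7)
The plan is to use Galois theory of the quadratic fields $\Q(r_n)=\Q(\sqrt{4n+1})$ and $\Q(r_m)=\Q(\sqrt{4m+1})$.  The key observation is that since $r_n$ and $1/r_n$ are the two roots of $\Delta_{T_n}$, they are Galois conjugates over $\Q$ whenever $r_n$ is irrational.  Consequently, if $r_n$ is irrational and some positive power $r_n^k$ lies in $\Q$, then applying the nontrivial Galois automorphism gives $r_n^k=r_n^{-k}$, so $r_n^{2k}=1$; but $r_n>1$ is real and positive, a contradiction.  Thus any positive power of $r_n$ is rational if and only if $r_n$ itself is rational, equivalently if and only if $4n+1$ is a perfect square (in which case $r_n=(y+1)/y$ for $n=y(y+1)$).

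Suppose for contradiction that $\Delta_{T_n}$ and $\Delta_{T_m}$ are not strongly coprime, so that after the reduction preceding the lemma we have $r_n^k=r_m^l$ for some coprime positive integers $k,l$.  If exactly one of $r_n,r_m$ is rational, say $r_n$, then $r_n^k\in\Q$ while $r_m^l\notin\Q$ by the observation above, which contradicts $r_n^k=r_m^l$.  If both are rational, then both $4n+1$ and $4m+1$ are perfect squares, so their product is a perfect square, contradicting the hypothesis.

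In the remaining case, neither $r_n$ nor $r_m$ is rational.  Set $\alpha:=r_n^k=r_m^l$.  By the first paragraph $\alpha$ is irrational, and it lies in $\Q(r_n)\cap\Q(r_m)=\Q(\sqrt{4n+1})\cap\Q(\sqrt{4m+1})$.  Since each of these ambient fields has degree $2$ over $\Q$, the presence of the irrational $\alpha$ in both forces $\Q(\sqrt{4n+1})=\Q(\sqrt{4m+1})$.  Equality of these real quadratic fields is equivalent to $(4n+1)(4m+1)$ being a rational square, hence an integer square, contradicting the hypothesis.  The only substantive step is the Galois observation, which hinges on the recognition that $r_n$ and $1/r_n$ are Galois conjugates; the remainder is routine case analysis.
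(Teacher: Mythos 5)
Your proof is correct, and it takes a genuinely different (and arguably cleaner) route than the paper's. The paper argues computationally: an easy induction shows $r_n^k=a+b\sqrt{4n+1}$ with $a,b$ \emph{positive} rationals, and likewise $r_m^l=c+d\sqrt{4m+1}$; then rearranging and squaring the equality forces $\sqrt{(4n+1)(4m+1)}\in\Q$ because the cross term $2bd$ cannot vanish, contradicting non-squareness. Your argument replaces the positivity induction by the Galois observation that the nontrivial automorphism of $\Q(\sqrt{4n+1})$ sends $r_n$ to $1/r_n$ (reciprocality of the Alexander polynomial), so an irrational $r_n$ can have no rational positive power since $r_n>1$; from there the common value $r_n^k=r_m^l$, being irrational, forces $\Q(\sqrt{4n+1})=\Q(\sqrt{4m+1})$ by a degree count, hence $(4n+1)(4m+1)$ is a square. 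What the paper's computation buys is uniformity: the single squaring step covers all cases at once, including when one of $4n+1$, $4m+1$ is itself a square, since $b,d>0$ regardless. What your approach buys is conceptual economy: the irrationality of powers is immediate from conjugation rather than from tracking coefficients, at the cost of a short case analysis (one or both of $r_n,r_m$ rational) and an appeal to the standard fact that two real quadratic fields coincide exactly when the product of their discriminant radicands is a rational square; all of these steps are standard and correctly handled in your write-up.
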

\begin{proof}
Otherwise, (\ref{isogeny}) holds so that $r_{n}^k = r_{m}^l$.  An easy inductive argument shows that for each $k>0$ there are positive rationals $a$ and $b$ such that $\left(\frac{2n+1+\sqrt{4n+1}}{2n}\right)^k = a+b\sqrt{4n+1}$.  Thus, there are positive rational numbers $a,b,c,d$ such that
\begin{equation*}
a+b\sqrt{4n+1}=c+d\sqrt{4m+1},
\end{equation*} 
and rearranging this equation we see that 
\begin{equation*}
b\sqrt{4n+1}-d\sqrt{4m+1}=c-a.
\end{equation*}
By squaring both sides of this equation and performing arithmetic we get 
\begin{equation*}
b^2(4n+1)+d^2(4m+1)-2bd\sqrt{(4n+1)(4m+1)}=(c-a)^2
\end{equation*}
so that since $bd$ is a nonzero rational number, $\sqrt{(4n+1)(4m+1)}$ is rational, contradicting the assumption that $(4n+1)(4m+1)$ is not a square.
\end{proof}

Thus, we are reduced to the case that $(4n+1)(4m+1)$ is a square.  This in particular implies that there are odd integers $a,b,D$, with $D$ squarefree and congruent to 1 mod 4 such that  $4n+1=a^2D$ and $4m+1=b^2D$.  Making these substitutions, 

\begin{equation}\label{isogeny2}
r_{n} = \frac{a^2D-1+2(1+ a\sqrt{D})}{(a\sqrt{D}+1)(a\sqrt{D}-1)} = \frac{a\sqrt{D}+1}{a\sqrt{D}-1}, ~~\hspace{1cm}~~ r_{m} 
= \frac{b\sqrt{D}+1}{b\sqrt{D}-1}.
\end{equation}

We first deal with the case where $D=1$, i.e. where $r_n$ and $r_m$ are both rational numbers, namely

\begin{equation*}r_n = 
\frac{a+1}{a-1}=\frac{2y+2}{2y}=\frac{y+1}{y},\end{equation*}
where $a=2y+1$, and $r_m=\frac{z+1}{z}$ with $b=2z+1$.  We thus have 
\[r_n^k = r_{y(y+1)}^k =  \left(\frac{y+1}{y}\right)^k = \left(\frac{z+1}{z}\right)^l = r_{z(z+1)}^l = r_m^l,\]
and comparing prime factorizations, we see that we must have $y=\alpha^l$ and $z=\alpha^k$ for some natural number $\alpha>1$.  But then if $k<l$ we have
\[\left(\frac{y+1}{y}\right)^k = \left(1+\frac{1}{\alpha^l}\right)^k < \left(1+\frac{1}{\alpha^k}\right)^k < \left(1+\frac{1}{\alpha^k}\right)^l =  \left(\frac{z+1}{z}\right)^l,\]
a contradiction.

The proof in the case $D>1$ works exactly the same way, but since $r_n$ and $r_m$ are elements of $\Q(\sqrt{D})$ and instead of $\Q$, we must replace unique prime factorization in $\Z$ with unique factorization into prime ideals (see \cite{algebraicnumbertheory} II.1, p. 54) in the ring  $\Z\left[\frac{1+\sqrt{D}}{2}\right]$ of integers   of the number field $\Q(\sqrt{D})$.

To this end, let $p$ be a prime number dividing $n$.  Then the ideal generated by $p$ in $\Z\left[\frac{1+\sqrt{D}}{2}\right]$ is not prime.  It factors as $
\langle p\rangle = P_1 P_{-1},$ where 
\begin{equation*}
P_1=\left\langle p,\frac{a\sqrt{D}+1}{2}\right\rangle,~~~ P_{-1}=\left\langle p,\frac{a\sqrt{D}-1}{2}\right\rangle
\end{equation*}

In order to see this, we expand $P_1 P_{-1} = \left\langle p^2, p\frac{a\sqrt{D}+1}{2}, p\frac{a\sqrt{D}-1}{2}, \frac{a^2D-1}{4}\right\rangle.$  The first, second and third of these generators are clearly in $\langle p\rangle$.  The fourth, $\frac{a^2D-1}{4}=n$ is a multiple of $p$ by design.  Thus, $\langle p\rangle\supseteq P_1P_{-1}$.  in order to see the opposite containment, note that the difference between the second and third of the generators is exactly $p$.  

Recall that for an ideal $I$ in $\Z\left[\frac{1+\sqrt{D}}{2}\right]$, the norm $N(I)$ is defined to be the order of the quotient $\left.\Z\left[\frac{1+\sqrt{D}}{2}\right]\right/I$ as an abelian group.  We will require the following basic properties of the norm (see \cite{algebraicnumbertheory} pp. 56-57): for ideals $I,J\subseteq \Z\left[\frac{1+\sqrt{D}}{2}\right]$,  
\begin{itemize}
\item if $I=\langle r+s\sqrt{D}\rangle$ is principal, then $N(I)=\left|N\left(r+s\sqrt{D}\right)\right| = \left|r^2-Ds^2\right|$,
\item if $I\supsetneq J$, then $N(I)$ is a proper divisor of $N(J)$,
\item if $N(I)$ is a prime integer, then $I$ is a prime ideal, and
\item the norm is multiplicative: $N(IJ)=N(I)N(J)$.
\end{itemize}
Thus, since \[N(P_1)N(P_{-1}) = N(\langle p\rangle) = p^2\]
with $N(P_1)<p^2$ and $N(P_{-1})<p^2$, we must have $N(P_1)=N(P_{-1})=p$ and both $P_1$ and $P_{-1}$ are prime ideals.

\begin{lemma}\label{multiplicity lemma}
If $p^k$ divides $n$ then for $\epsilon=\pm 1$, 
\[P_{\epsilon}^k=\left\langle p^k, \frac{a\sqrt{D}+\epsilon}{2}\right\rangle.\]
\end{lemma}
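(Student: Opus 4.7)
The plan is to prove the lemma by induction on $k$. The base case $k=1$ holds by the definition of $P_\epsilon$ given just before the lemma statement. For the inductive step, assume the statement for $k$ and prove it for $k+1$, where now we suppose $p^{k+1}$ divides $n$ (so in particular $p^k \mid n$, allowing us to apply the inductive hypothesis).

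Since $P_\epsilon^{k+1} = P_\epsilon \cdot P_\epsilon^k$, multiplying the two generators $p$, $\frac{a\sqrt{D}+\epsilon}{2}$ of $P_\epsilon$ by the generators $p^k$, $\frac{a\sqrt{D}+\epsilon}{2}$ of $P_\epsilon^k$ gives
\[P_\epsilon^{k+1} = \left\langle p^{k+1},\ p^k \cdot \tfrac{a\sqrt{D}+\epsilon}{2},\ p \cdot \tfrac{a\sqrt{D}+\epsilon}{2},\ \left(\tfrac{a\sqrt{D}+\epsilon}{2}\right)^2\right\rangle.\]
The key algebraic identity, which is the only step requiring actual computation, is
\[\left(\tfrac{a\sqrt{D}+\epsilon}{2}\right)^2 = n + \epsilon \cdot \tfrac{a\sqrt{D}+\epsilon}{2};\]
this follows by expanding the square and using $a^2 D = 4n+1$ together with $\epsilon^2=1$. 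Because $p^{k+1}$ divides $n$, the element $n$ lies in $\langle p^{k+1}\rangle \subseteq P_\epsilon^{k+1}$, so subtracting $n$ from the fourth generator shows $\frac{a\sqrt{D}+\epsilon}{2} \in P_\epsilon^{k+1}$. This yields the containment $\langle p^{k+1}, \frac{a\sqrt{D}+\epsilon}{2}\rangle \subseteq P_\epsilon^{k+1}$. The reverse containment is immediate: $p^{k+1}$ is literally a generator, and the other three displayed generators are all visibly multiples of $\frac{a\sqrt{D}+\epsilon}{2}$.

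The only substantive obstacle is noticing the identity $\left(\tfrac{a\sqrt{D}+\epsilon}{2}\right)^2 = n + \epsilon\cdot \tfrac{a\sqrt{D}+\epsilon}{2}$, which lets us trade the quadratic generator for a linear one modulo $n$; the divisibility hypothesis $p^{k+1}\mid n$ is exactly what is needed to absorb that $n$ into $\langle p^{k+1}\rangle$. Everything else is routine bookkeeping with ideal generators, and the induction essentially writes itself once the identity is in hand.
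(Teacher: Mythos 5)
Your proof is correct and is essentially the paper's argument: the same induction, the same expansion of $P_\epsilon^{k+1}=P_\epsilon\cdot P_\epsilon^k$, and the same key identity $\left(\frac{a\sqrt{D}+\epsilon}{2}\right)^2=n+\epsilon\frac{a\sqrt{D}+\epsilon}{2}$, with the divisibility hypothesis absorbing $n$ into the ideal. The only differences are cosmetic (indexing the step as $k\to k+1$ rather than $k-1\to k$, and spelling out both containments where the paper writes a chain of equalities).
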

\begin{proof}
We proceed by induction; the lemma holds for $k=1$ by the definition of $P_{\epsilon}$.   If it holds for $k-1$, then 
\begin{equation*}
\begin{array}{rcl}
P_{\epsilon}^k&=&P_{\epsilon}^{k-1}P_{\epsilon}\\
&=&\left\langle p^{k-1}, \frac{a\sqrt{D}+\epsilon}{2}\right\rangle\left\langle p, \frac{a\sqrt{D}+\epsilon}{2}\right\rangle\\
&=&\left\langle p^{k}, p\frac{a\sqrt{D}+\epsilon}{2}, n+\epsilon\frac{a\sqrt{D}+\epsilon}{2}\right\rangle,
\end{array}
\end{equation*}
and since $p^k$ divides $n$, we have
\begin{equation*}
\begin{array}{rcl}
P_{\epsilon}^k&=&\left\langle p^{k}, p\frac{a\sqrt{D}+\epsilon}{2}, \frac{a\sqrt{D}+\epsilon}{2}\right\rangle\\
&=&\left\langle p^{k}, \frac{a\sqrt{D}+\epsilon}{2}\right\rangle.
\end{array}
\end{equation*}
completing the proof.
\end{proof}

Returning to the situation of interest, we determine the multiplicity with which $P_{1}$ divides the numerators and denominators of \eref{isogeny2} in terms of the multiplicity with which $p$ divides $m$ and $n$.

\begin{lemma}\label{divisibility lemma}
Assume that $p$ divides $n$ with multiplicity $k>0$ over the integers, then 
$\frac{a\sqrt{D}+1}{2}$ is contained in $P_1^k$ but not $P_1^{k+1}$ and $\frac{a\sqrt{D}-1}{2}$ is not contained in $P_1$.
\end{lemma}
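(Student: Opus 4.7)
The plan is to exploit the identity $\frac{a\sqrt D+1}{2}\cdot\frac{a\sqrt D-1}{2}=\frac{a^2D-1}{4}=n$, together with unique prime ideal factorization in $\Z\!\left[\frac{1+\sqrt D}{2}\right]$, to read off the $P_1$-adic valuations of the two factors from the factorization of $\langle n\rangle$.

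The easy half is the second assertion, that $\frac{a\sqrt D-1}{2}\notin P_1$. If it were in $P_1$, then since $\frac{a\sqrt D+1}{2}\in P_1$ by the definition of $P_1$, the difference $\frac{a\sqrt D+1}{2}-\frac{a\sqrt D-1}{2}=1$ would lie in $P_1$, contradicting that $P_1$ is a proper (prime) ideal.

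Next I would write $n=p^k q$ with $\gcd(p,q)=1$ and compute the $P_1$-adic valuation of $\langle n\rangle$ two ways. On the one hand, $\langle p\rangle=P_1P_{-1}$ gives $\langle p^k\rangle=P_1^kP_{-1}^k$, and $\langle q\rangle$ contributes nothing to the $P_1$-valuation: if $P_1\mid\langle q\rangle$, then $q\in P_1$ would force $N(P_1)=p$ to divide $N(\langle q\rangle)=q^2$, hence $p\mid q$. So the $P_1$-valuation of $\langle n\rangle$ is exactly $k$. On the other hand, $\langle n\rangle=\left\langle\frac{a\sqrt D+1}{2}\right\rangle\cdot\left\langle\frac{a\sqrt D-1}{2}\right\rangle$, and by the preceding paragraph the second factor contributes $0$ to the $P_1$-valuation. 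Therefore the $P_1$-valuation of $\left\langle\frac{a\sqrt D+1}{2}\right\rangle$ is exactly $k$, which is equivalent to $\frac{a\sqrt D+1}{2}\in P_1^k\setminus P_1^{k+1}$.

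I do not anticipate any genuine obstacle; the only place care is required is in confirming that primes of $\Z\!\left[\frac{1+\sqrt D}{2}\right]$ above a rational prime $\ell\neq p$ are coprime to $P_1$, which is handled by the norm argument above. Alternatively, the containment $\frac{a\sqrt D+1}{2}\in P_1^k$ can be read off directly from Lemma~\ref{multiplicity lemma}, and then only the sharpness $\frac{a\sqrt D+1}{2}\notin P_1^{k+1}$ need be extracted from the valuation count, but the unified approach through $\langle n\rangle=\langle\frac{a\sqrt D+1}{2}\rangle\langle\frac{a\sqrt D-1}{2}\rangle$ is cleaner.
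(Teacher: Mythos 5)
Your proposal is correct, but it takes a noticeably different route from the paper's. The paper obtains the containment $\frac{a\sqrt{D}+1}{2}\in P_1^k$ from the inductive computation of Lemma~\ref{multiplicity lemma} (which identifies $P_1^k=\left\langle p^k,\frac{a\sqrt{D}+1}{2}\right\rangle$), and then rules out membership in $P_1^{k+1}$ by the single norm computation $N\left(\left\langle\frac{a\sqrt{D}+1}{2}\right\rangle\right)=n$ combined with $N(P_1)=p$. You instead bypass Lemma~\ref{multiplicity lemma} entirely: writing $n=p^kq$ with $p\nmid q$, you compare the two factorizations $\langle n\rangle=P_1^kP_{-1}^k\langle q\rangle$ and $\langle n\rangle=\left\langle\frac{a\sqrt{D}+1}{2}\right\rangle\left\langle\frac{a\sqrt{D}-1}{2}\right\rangle$ and read off the exact $P_1$-adic valuation of $\left\langle\frac{a\sqrt{D}+1}{2}\right\rangle$, getting containment and sharpness simultaneously from unique factorization of ideals; your handling of the second assertion (the difference of the two generators is $1$) is identical to the paper's. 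One small point you leave implicit: for the first factorization to give $P_1$-valuation exactly $k$ you also need $P_1\neq P_{-1}$, i.e.\ $v_{P_1}(\langle p\rangle)=1$; this follows in one line from what you already proved, since $\frac{a\sqrt{D}-1}{2}$ lies in $P_{-1}$ by definition but not in $P_1$. With that remark added your argument is complete, and it has the merit of making the inductive Lemma~\ref{multiplicity lemma} unnecessary (it is used in the paper only to feed this lemma), at the cost of leaning a bit more heavily on unique factorization of ideals --- a tool the paper invokes immediately afterwards anyway.
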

\begin{proof}

First, note that if $\frac{a\sqrt{D}-1}{2}\in P_1$ then since $\frac{a\sqrt{D}+1}{2}\in P_1$, it would follow that $1\in P_1$, contradicting that $P_1$ is a proper ideal.

By Lemma~\ref{multiplicity lemma}, $\frac{a\sqrt{D}+1}{2}\in P_{1}^k$.  
In order to show that no greater power of $P_1$ contains $\frac{a\sqrt{D}+1}{2}$, we perform a norm computation:  
\[N\left(\left\langle\frac{a\sqrt{D}+1}{2}\right\rangle\right)=\left|\frac{(1+a\sqrt{D})(1-a\sqrt{D})}{4}\right|=n.\] 
Recalling that $N(P_1)=p$, we note that if $\frac{a\sqrt{D}+1}{2}$ were contained in $ P_{1}^{k+1}$ then it would follow that $p^{k+1}$ divides $n$, in contradiction to the assumption to the contrary.  This completes the proof.
\end{proof}

Now suppose that (\ref{isogeny}) holds, so that in light of \eref{isogeny2}:

\begin{equation}\label{cancellation}
\begin{array}{cc}
\left(\left.{\frac{a\sqrt{D}+1}{2}}\right/{\frac{a\sqrt{D}-1}{2}}\right)^k=
\left(\left.{\frac{b\sqrt{D}+1}{2}}\right/{\frac{b\sqrt{D}-1}{2}} \right)^l
\end{array}
\end{equation}

Cross multiplying reduces this to
\begin{equation}\label{isogeny3}
\begin{array}{c}
\left(\frac{a\sqrt{D}+1}{2}\right)^k\left(\frac{b\sqrt{D}-1}{2}\right)^l = \left(\frac{a\sqrt{D}-1}{2}\right)^k\left(\frac{b\sqrt{D}+1}{2}\right)^l.
\end{array}
\end{equation}

First, suppose that $p$ is a prime factor of $n$ but is not a factor of $m$.  By Lemma~\ref{divisibility lemma}, $\frac{a\sqrt{D}-1}{2}\not\in P_1$.  Since the norm of $\frac{b\sqrt{D}-1}{2}$ is $m$ which is not divisible by $p=N(P_1)$, we must have $\frac{b\sqrt{D}-1}{2}\not\in P_1$.  
Thus, the right-hand side of \eref{isogeny3} is not an element of the prime ideal $P_1$, but we know the left-hand side is because $\frac{a\sqrt{D}+1}{2}\in P_1$, a contradiction.  Thus, $n$ and $m$ must have the same set of prime factors.

Suppose then that $p$ is a prime factor of both $n$ and $m$.  Let $x=v_p(n)$ be the highest power such that $p^x$ divides $n$ and likewise let $y=v_p(m)$ be the highest power of $p$ such that $p$ divides $m$.  We have two factorizations of the ideal $\langle p\rangle$ in the Dedekind domain $\Z\left[\frac{1+\sqrt{D}}{2}\right]$: 
\begin{equation*}
\begin{array}{rclll}
\langle p\rangle&=&P_1P_{-1}&\text{ where }&P_1=\left\langle p,\frac{a\sqrt{D}+1}{2}\right\rangle,~~P_{-1}=\left\langle p,\frac{a\sqrt{D}-1}{2}\right\rangle,\text{~~and}\\
\langle p\rangle&=&\widetilde{P}_{1}\widetilde{P}_{-1}&\text{ where }&\widetilde{P}_1=\left\langle p,\frac{b\sqrt{D}+1}{2}\right\rangle,~~\widetilde{P}_{-1}=\left\langle p,\frac{b\sqrt{D}-1}{2}\right\rangle.
\end{array}
\end{equation*}
By unique factorization of ideals, either $P_1 = \widetilde{P}_1$ or $P_1=\widetilde{P}_{-1}$.  In the latter case, by Lemma~\ref{divisibility lemma}, $P_1$ divides the left hand side of \eref{isogeny3} with multiplicity $xk+yl$ and the right hand side with multiplicity $0$, which is a contradiction.  Thus, $P_1=\widetilde{P}_1$ and $P_1$ divides the left hand side with multiplicity $xk$ and the left hand side with multiplicity $yl$.  It follows that $xk=yl$.  


Since we can assume that $k$ and $l$ are relatively prime, $xk=yl$ implies that there is some integer $c$ (depending on $p$) with $y=ck$ and $x=cl$.  Thus, $n$ and $m$ have related prime factorizations:
\begin{equation}
\begin{array}{rclllll}
n&=&p_1^{c_1l}p_2^{c_2l}\dots p_f^{c_fl}&=&(p_1^{c_1}p_2^{c_2}\dots p_f^{c_f})^l&=&\alpha^l,\text{ and}\\
m&=&p_1^{c_1k}p_2^{c_2k}\dots p_f^{c_fk}&=&(p_1^{c_1}p_2^{c_2}\dots p_f^{c_f})^k&=&\alpha^k,\\
\end{array}
\end{equation} 
for some integer $\alpha=p_1^{c_1}p_2^{c_2}\dots p_f^{c_f}>1$.

Now, having analyzed all the finite primes, we consider the ``infinite prime'', i.e. we show using an inequality that $(r_n)^k=(r_m)^l$ is impossible over the real numbers
for $n=\alpha^l$ and $m=\alpha^k$.
Let $\alpha$ be any integer greater than $1$.  Exchanging $n$ and $m$ if necessary, we may assume $k<l$.  Substituting $n=\alpha^l$ and $m=\alpha^k$ in to the original expressions for $r_n$ and $r_m$ from the quadratic formula, we get
\begin{equation*}
\left(\frac{2\alpha^l+1+\sqrt{4\alpha^l+1}}{2\alpha^l}\right)^k=
\left(\frac{2\alpha^k+1+\sqrt{4\alpha^k+1}}{2\alpha^k}\right)^l.
\end{equation*}
  Making a simplification, this is equivalent to
  \begin{equation*}
\left(1+\frac{1}{2\alpha^l} + \sqrt{\frac{1}{\alpha^l}+\frac{1}{4\alpha^{2l}}}\right)^k=
\left(1+\frac{1}{2\alpha^k} + \sqrt{\frac{1}{\alpha^k}+\frac{1}{4\alpha^{2k}}}\right)^l.
\end{equation*}

Observe that for $l>0$, the real number $1+\frac{1}{2\alpha^l} + \sqrt{\frac{1}{\alpha^l}+\frac{1}{4\alpha^{2l}}}$ is greater than $1$ and that this number increases as  $l$ decreases.  Thus, since $k<l$,
\begin{equation*}
\left(1+\frac{1}{2\alpha^l} + \sqrt{\frac{1}{\alpha^l}+\frac{1}{4\alpha^{2l}}}\right)^k<
\left(1+\frac{1}{2\alpha^k} + \sqrt{\frac{1}{\alpha^k}+\frac{1}{4\alpha^{2k}}}\right)^k<
\left(1+\frac{1}{2\alpha^k} + \sqrt{\frac{1}{\alpha^k}+\frac{1}{4\alpha^{2k}}}\right)^l.
\end{equation*}
In particular, this implies that these are not equal and completes the proof of Theorem~\ref{examples of strong coprimality}.
\end{proof}

\section{Strong irreducibility}\label{strong irreducibility}

A polynomial $f\in \Q[t]$ is \emph{strongly irreducible} if $f(t^k)$ is irreducible for all positive integers $k$.  In this section, we apply the idea behind Eisenstein's Irreducibility Criterion to prove various criteria that guarantee strong irreducibility.   Recall (\cite{Gou}, 2.1) that for $p\in \Z$ prime, the \emph{$p$-adic valuation} on $\Q$ is the function $\fun{v_p}{\Q}{\Z\cup\{+\infty\}}$ defined by 
\[v_p(x) = \begin{cases}k & x= \frac{r}{s}\cdot p^k \text{~\hspace{.25cm}where $k,r,s\in \Z$ satisfy $p\nmid r$, ~~$p\nmid s$}, \\ +\infty & x=0.\end{cases}
\]

\begin{theorem}[An Eisenstein criterion for strong irreducibility]\label{Eisenstein criterion for strong irreducibility} Let $f=c_dt^d+\cdots+c_0$ be an irreducible polynomial in $\Q[t]$.  Let $p$ be a prime number and suppose
that for some $0\leq i<j\leq d$, the following conditions are satisfied, where $a=j-i$, $b=v_p(c_j)-v_p(c_i)$, and $m=b/a$:
\begin{enumerate}
\item\label{firstcondition} $c_i$ and $c_j$ are non-zero,
\item\label{nothorizontal} $v_p(c_i)\neq v_p(c_j)$,
\item\label{nolatticepoint} $a$ and $b$ are relatively prime,
\item\label{notwild} $p \nmid b$, and
\item\label{convex hull condition} $\begin{array}{ll}
v_p(c_k) \geq m(k-i)+v_p(c_i) &\text{for $i<k<j$, and } \\
v_p(c_k) > m(k-i)+v_p(c_i)&\text{for $0\leq k<i$ or $j<k\leq d$. }
\end{array}$
\end{enumerate}
Then if $f(t^{|b|})$ is irreducible, $f$ is strongly irreducible.
\end{theorem}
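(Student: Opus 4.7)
My plan is to combine a Newton polygon analysis of $f$ over $\Q_p$ with Capelli's theorem on the irreducibility of binomials $t^k-\alpha$, using the hypothesis that $f(t^{|b|})$ is irreducible as a bootstrap to cover the primes dividing $b$. Conditions (1), (2), and (5) tell us that the Newton polygon of $f$ contains the segment from $(i,v_p(c_i))$ to $(j,v_p(c_j))$ of slope $m=b/a$, and that no other part of the polygon has slope $m$. So by standard Newton polygon theory, $f$ has exactly $a$ roots in $\overline{\Q_p}$ with $p$-adic valuation $-m$. For any such root $\alpha$, the value $v_p(\alpha)=-b/a$ must lie in the value group $\tfrac{1}{e}\Z$ of $L:=\Q_p(\alpha)$, where $e$ is the ramification index; $a\mid eb$ combined with $\gcd(a,b)=1$ (condition (3)) forces $a\mid e$, and $e\leq[L:\Q_p]\leq a$ then forces $e=[L:\Q_p]=a$, so there is a single Galois orbit. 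Hence the Newton polygon segment corresponds to a single irreducible factor $h$ of $f$ over $\Q_p$, of degree $a$, with $L/\Q_p$ totally ramified.

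Fix a global root $\alpha$ of $f$ and an embedding $\Q(\alpha)\hookrightarrow L$ corresponding to $h$. By Capelli's theorem on $t^k-\alpha$, to conclude that $f(t^k)$ is irreducible over $\Q$ it suffices to establish for every $k\geq 1$ that (A) $\alpha\notin\Q(\alpha)^q$ for each prime $q\mid k$, and (B) $-4\alpha\notin\Q(\alpha)^4$ if $4\mid k$. For (A), I split on whether $q\mid b$. If $q\nmid b$, then a putative $q$-th root $\gamma\in L$ of $\alpha$ would satisfy $v_p(\gamma)=-b/(aq)$, which lies in $\tfrac{1}{a}\Z$ only if $q\mid b$ (using $\gcd(a,b)=1$), a contradiction. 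If $q\mid b$, then $q\mid|b|$, and Capelli applied to the hypothesis that $f(t^{|b|})$ is irreducible gives $\alpha\notin\Q(\alpha)^q$. Condition (4), $p\nmid b$, is needed precisely to cover the case $q=p$: since $p\nmid|b|$, the hypothesis supplies no information at $p$, so the value-group argument must apply, and that argument requires $p\nmid b$.

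A parallel dichotomy handles (B). When $4\nmid b$, a direct valuation computation shows $-4\alpha\notin L^4$: for $p$ odd this is immediate from $v_p(-4\alpha)=-b/a$, and for $p=2$ one uses that $b$ is odd (again by condition (4)) to rule out the relevant denominator. When $4\mid b$ (so necessarily $p$ is odd), $4\mid|b|$, and Capelli applied to the hypothesis on $f(t^{|b|})$ supplies (B). The main obstacle I anticipate is the orbit-counting step that produces the single irreducible $\Q_p$-factor of degree $a$ with totally ramified completion; once that is in hand, the Capelli arguments for both (A) and (B) reduce to bookkeeping across a handful of sub-cases.
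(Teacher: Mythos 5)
Your proposal is correct, and its first half is exactly the paper's local input: conditions (1), (2), (5) isolate a Newton polygon edge of slope $m=b/a$ and horizontal length $a$, and coprimality of $a$ and $b$ forces the corresponding degree-$a$ factor of $f$ over $\Q_p$ to be irreducible with $\Q_p(\alpha)/\Q_p$ totally ramified of degree $a$. Where you genuinely diverge is in how the local data and the hypothesis on $f(t^{|b|})$ are combined. The paper stays entirely inside ramification theory: it reduces to exponents $x=|b|z$, proves that $\Q_p(\sqrt[|b|]{r})/\Q_p(r)$ is \emph{unramified} (this is where $p\nmid b$ enters, via the unit $r\pi^{-b}$), and then compares ramification indices along the tower $\Q_p\subset\Q_p(r)\subset\Q_p(\sqrt[|b|]{r})\subset\Q_p(\sqrt[x]{r})$ to force $[\Q(\sqrt[x]{r}):\Q(\sqrt[|b|]{r})]=z$. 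You instead pass through Capelli's binomial criterion over $K=\Q(\alpha)$ (using the standard equivalence between irreducibility of $f(t^k)$ over $\Q$ and of $t^k-\alpha$ over $K$) and split primes $q\mid k$ according to whether $q\mid b$: when $q\nmid b$ the value group $\frac{1}{a}\Z$ of $\Q_p(\alpha)$ rules out $q$-th roots of $\alpha$, and when $q\mid b$ (or $4\mid b$ for the $-4$-fourth-power condition) the irreducibility of $t^{|b|}-\alpha$ over $K$ supplies the needed non-power statements via the easy direction of Capelli. This trades the paper's unramifiedness lemma and index bookkeeping for Capelli plus an explicit treatment of the $4\mid k$ case, and both routes are complete. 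One small correction to your commentary: in your argument condition (4) is not what rescues case (A) at $q=p$ --- if $p$ did divide $b$, that case would be covered by the bootstrap from $f(t^{|b|})$ anyway; the place where $p\nmid b$ is genuinely indispensable for you is the $p=2$ analysis of $-4\alpha\notin K^4$, where oddness of $b$ is exactly what prevents $v_2(-4\alpha)=(2a\pm b)/a$ from being divisible by $4$ in the value group, and you do handle that correctly.
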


We defer the proof of Theorem \ref{Eisenstein criterion for strong irreducibility} until the end of the section.  To make sense of the hypotheses, and especially condition (\ref{convex hull condition}), we recall the definition of the \emph{Newton polygon of $f$ at the prime $p$} (see \cite{Gou}, 6.4): we plot the points $(k,v_p(c_k))$ in the plane for $0\leq k\leq d$, and take the lower part of the boundary of their convex hull.\footnote{We omit the points $(k,+\infty)$ arising from coefficients $c_k=0$, since these points would not affect the lower boundary of the convex hull.}  For example, if $f(t)=8t^4 -26t^3+ 35t^2 -26t+ 8$ is the Alexander polynomial of the knot $12a_{1163}$ from \cite{knotinfo}, then the Newton polygon of $f$ at the primes $p=2$ and $p=13$ are the following:

\setlength{\unitlength}{1.2cm}
\begin{picture}(6, 5)(-1,-1)
\thinlines
\multiput(0,-.5)(1,0){5}{\line(0,1){4}}
\multiput(-.5,0)(0,1){4}{\line(1,0){5}}
\thicklines
\put(0,-.5){\line(0,1){4}}
\put(-.5,0){\line(1,0){5}}
\put(4.35,-.35){\small $k$}
\put(-.68,3.35){\tiny $v_2(c_k)$}

\put(0,3){\circle*{.15}}
\put(0,3){\line(1,-2){1}}
\put(1,1){\circle*{.15}}
\put(1,1){\line(1,-1){1}}
\put(2,0){\circle*{.15}}
\put(2,0){\line(1,1){1}}
\put(3,1){\circle*{.15}}
\put(3,1){\line(1,2){1}}
\put(4,3){\circle*{.15}}
\end{picture}
\setlength{\unitlength}{1.2cm}
\begin{picture}(6, 5)(-1,-1)
\thinlines
\multiput(0,-.5)(1,0){5}{\line(0,1){4}}
\multiput(-.5,0)(0,1){4}{\line(1,0){5}}

\thicklines
\put(0,-.5){\line(0,1){4}}
\put(-.5,0){\line(1,0){5}}
\put(4.35,-.35){\small $k$}
\put(-.78,3.35){\tiny $v_{13}(c_k)$}

\linethickness{.065\unitlength}

\put(0,0){\circle*{.15}}
\put(0,0){\line(1,0){4}}
\put(1,1){\circle*{.15}}
\put(2,0){\circle*{.15}}
\put(3,1){\circle*{.15}}
\put(4,0){\circle*{.15}}
\end{picture}

In terms of the Newton polygon for $f$ at $p$, we can restate conditions (\ref{firstcondition})-(\ref{convex hull condition}) as saying that the Newton polygon has some edge which (\ref{nothorizontal}) is  not horizontal, (\ref{nolatticepoint}) does not pass through a lattice point besides its endpoints, and (\ref{notwild}) has a vertical height $b$ that is not divisible by $p$.  In the case of the above example, the edge $((1,1),(2,0))$ of the Newton polygon at $p=2$ satisfies conditions (\ref{firstcondition})-(\ref{convex hull condition}) with $b=1$.  Thus, since $f$ is irreducible, by Theorem \ref{Eisenstein criterion for strong irreducibility}, $f$ is strongly irreducible.  Since $12a_{1163}$ is of infinite order in the algebraic concordance group, Corollary~\ref{injection from algebraic to rational algebraic} implies that it is of infinite order in the rational algebraic concordance group.

\begin{corollary}\label{degnsimpleeisenstein} Let $f=c_dt^d+\cdots+c_0\in \Z[t]$ be an irreducible polynomial of degree $d$, where we assume the coefficients do not all share a common factor.  Then if some prime $p$ divides $c_d$ or $c_0$ exactly once, $f$ is strongly irreducible.
\end{corollary}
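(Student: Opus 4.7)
The plan is to deduce the corollary directly from Theorem~\ref{Eisenstein criterion for strong irreducibility} by exhibiting a Newton polygon edge at $p$ of vertical drop $|b|=1$. Since $|b|=1$, the auxiliary hypothesis ``$f(t^{|b|})$ irreducible'' that appears in the theorem collapses to the irreducibility of $f$ itself, which is given. So the whole task is to pick the right pair $i<j$ and verify conditions (1)--(5) of the theorem.

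I would treat two cases. \emph{Case 1:} suppose the prime $p$ satisfies $v_p(c_0)=1$. Because the $c_k$ have no common factor, some coefficient is not divisible by $p$; let $j$ be the smallest index with $v_p(c_j)=0$, and set $i=0$. Then $a=j$ and $b=v_p(c_j)-v_p(c_0)=-1$, so $c_i,c_j$ are nonzero, $v_p(c_i)\neq v_p(c_j)$, $\gcd(a,b)=1$, and $p\nmid b$; this handles (1)--(4). For the convex-hull condition (5), minimality of $j$ forces $v_p(c_k)\geq 1$ for $0<k<j$, whereas the line from $(0,1)$ to $(j,0)$ takes the value $1-k/j<1$ at such $k$, giving $v_p(c_k)\geq 1>1-k/j$; for $j<k\leq d$ the same line takes negative values, so the strict inequality $v_p(c_k)\geq 0>1-k/j$ is automatic.

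\emph{Case 2:} suppose instead that $v_p(c_d)=1$. This is symmetric: set $j=d$ and let $i$ be the \emph{largest} index with $v_p(c_i)=0$ (which exists by the common-factor hypothesis). Then $a=d-i$ and $b=1$, and the verification of (1)--(5) is parallel, using maximality of $i$ to bound $v_p(c_k)$ from below for $i<k<d$ and using negativity of $(k-i)/(d-i)$ for $0\leq k<i$ to get the strict inequality for free.

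In both cases $|b|=1$, so Theorem~\ref{Eisenstein criterion for strong irreducibility} requires only that $f(t)=f(t^{|b|})$ be irreducible, which is part of the hypothesis of the corollary. There is no serious obstacle here; the corollary is essentially the unit-slope specialization of the theorem, and the only substantive point is that the minimal (resp.\ maximal) choice of the opposite endpoint, together with the assumption that the $c_k$ share no common factor, is exactly what is needed to force the remaining coefficients to sit on or strictly above the chosen edge of the Newton polygon.
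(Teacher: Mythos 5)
Your proposal is correct and follows essentially the same route as the paper: in each case it selects the unit-height edge of the Newton polygon at $p$ (from $(0,1)$ to $(j,0)$ with $j$ the first index not divisible by $p$, or from $(i,0)$ to $(d,1)$ with $i$ the last such index) and checks conditions (1)--(5) of Theorem~\ref{Eisenstein criterion for strong irreducibility}, with $|b|=1$ reducing the auxiliary hypothesis to the given irreducibility of $f$. The verification of the convex-hull condition via minimality (resp.\ maximality) of the chosen index and nonnegativity of $v_p$ on integer coefficients is exactly the content of the paper's argument.
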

\begin{proof} Suppose that $p$ divides $c_0$ exactly once and that $j$ is the first index for which $c_j$ is not divisible by $p$.  
\begin{center}
\setlength{\unitlength}{1.2cm}
\begin{picture}(6, 5)(-1,-1)
\thinlines
\multiput(0,-.5)(1,0){5}{\line(0,1){4}}
\multiput(-.5,0)(0,1){4}{\line(1,0){5}}
\thicklines
\put(0,-.5){\line(0,1){4}}
\put(-.5,0){\line(1,0){5}}
\put(4.35,-.35){\small $k$}
\put(-.68,3.35){\tiny $v_p(c_k)$}
\put(3.05,-0.2){\tiny $(j,0)$}
\put(.05,1.1){\tiny $(0,1)$}

\put(0,1){\circle*{.15}}
\put(0,1){\line(3,-1){3}}
\put(3,0){\circle*{.15}}
\put(3,0){\line(6,1){1.5}}
\put(1,1){\circle*{.15}}
\put(2,3){\circle*{.15}}

\put(4,2){\circle*{.15}}
\end{picture}
\end{center}
Then $((0,1),(j,0))$ is an edge of the Newton polygon for $f$ at $p$, with $a=j$, $b=-1$, so certainly $a$ and $b$ are relatively prime, $p\nmid b$, and the hypotheses of Theorem \ref{Eisenstein criterion for strong irreducibility} are satisfied.  We conclude $f$ is strongly irreducible.  

Similarly, in the case that instead $p$ divides $c_d$ exactly once, we consider the edge $((i,0),(d,1))$, where $i$ is the last index for which $c_i$ is not divisible by $p$.
\end{proof}

Note that the previous strong irreducibility result of Jae Choon Cha follows immediately from Corollary \ref{degnsimpleeisenstein}:
\begin{proposition*}[\cite{Cha07}, Proposition 3.18]
Suppose $\lambda(t)=pt^2-(2p+k)t+p$, where $p$ is a prime and $k$ is an integer such that $k\not\equiv 0 \pmod{p}$ and $k\not\equiv -2p\pm 1 \pmod{p^2}$.  Then $\lambda(t)$ is strongly irreducible.
\end{proposition*}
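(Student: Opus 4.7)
The plan is to derive Cha's proposition directly from Corollary~\ref{degnsimpleeisenstein}, with the prime $p$ playing the same role in both statements.  Observe first that $\lambda$ has leading and constant coefficients both equal to $p$, a prime dividing each with multiplicity exactly one, so the ``Eisenstein-type'' divisibility hypothesis of the corollary is automatic.  The only remaining things to check are that the three coefficients of $\lambda$ have no common factor and that $\lambda$ itself is irreducible in $\Q[t]$.

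The common-factor condition is quickly dispatched: the only candidate common prime is $p$ itself, and $p\mid(2p+k)$ would force $p\mid k$, which is excluded by $k\not\equiv 0\pmod{p}$.

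For irreducibility, I would exploit the fact that $\lambda$ has degree two, so it is reducible over $\Q$ precisely when it has a rational root.  The rational root theorem restricts any such root to the set $\{\pm 1,\pm p,\pm 1/p\}$.  Evaluating $\lambda$ at these six candidates turns the vanishing of $\lambda$ into an equation on $k$: $\lambda(\pm 1)=0$ forces $k\in\{0,-4p\}$, both excluded by $k\not\equiv 0\pmod{p}$; and each of $\lambda(\pm p)=0$ and $\lambda(\pm 1/p)=0$ reduces to $k=(p-1)^2$ or $k=-(p+1)^2$, whose residues mod $p^2$ are $-2p+1$ and $-2p-1$ respectively---precisely the residues ruled out by $k\not\equiv -2p\pm 1\pmod{p^2}$.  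Once these six evaluations are done, Corollary~\ref{degnsimpleeisenstein} applies and $\lambda$ is strongly irreducible.

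There is really no obstacle; the content of the argument is the observation that the two congruence conditions on $k$ appearing in Cha's hypothesis are tailored exactly to exclude the four integer values of $k$ for which $\lambda$ could have a rational root.  The only mildly non-obvious point is noticing that $(p-1)^2\equiv -2p+1\pmod{p^2}$ and $-(p+1)^2\equiv -2p-1\pmod{p^2}$, so that Cha's mod-$p^2$ hypothesis is exactly the hypothesis that $k$ is neither of these two specific integers.
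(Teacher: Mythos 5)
Your proof is correct and takes essentially the same route as the paper: Cha's proposition is deduced from Corollary~\ref{degnsimpleeisenstein}, with $k\not\equiv 0\pmod{p}$ giving primitivity and excluding the roots $\pm 1$, and the mod-$p^2$ condition excluding $\pm p$ and $\pm\frac{1}{p}$, hence guaranteeing irreducibility over $\Q$. The paper even records the same sharpening you note, namely that the second hypothesis is only needed to exclude $k=-2p\pm(1+p^2)$, i.e.\ $k=(p-1)^2$ or $k=-(p+1)^2$.
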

We need the second congruence condition only to guarantee that $\pm \frac{1}{p}$, $\pm p$ are not the roots of $\lambda(t)$, and that therefore $\lambda(t)$ is irreducible over $\Q$.  Indeed, this means that the hypothesis $k\not\equiv -2p\pm 1 \pmod{p^2}$ in the proposition can be weakened to $k\neq -2p\pm(1+p^2)$.

We now state a second strong irreducibility criterion that takes into account data at several primes.
\begin{theorem}\label{strong irreducibility theorem using several primes}
Let $f=c_dt^d+\cdots+c_0 \in \Q[x]$ be irreducible.  Assume that for every prime $q$ there exists a prime $p$ and an edge $( (i,v_p(c_i)),(i+a,v_p(c_i)+b) )$  of the Newton polygon for $f$ at $p$ such that the numbers $a$ and $b$ are relatively prime and $q\nmid b$.  Then $f$ is strongly irreducible.
\end{theorem}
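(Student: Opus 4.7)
The plan is to reduce to Theorem~\ref{Eisenstein criterion for strong irreducibility} one prime at a time, with the Capelli--Vahlen theorem on the irreducibility of $t^k-\alpha$ serving as the glue.  The central local input I need (and which the proof of Theorem~\ref{Eisenstein criterion for strong irreducibility} presumably establishes) is the following Dumas-style statement: under the Newton polygon hypotheses at a prime $p$ with edge of length $a$ and vertical drop $b$, the edge singles out an irreducible factor of $f$ over $\Q_p$ of degree exactly $a$, the corresponding extension being totally ramified because the slope $b/a$ is already in lowest terms and conditions (1)--(5) cleanly separate this edge from the rest of the polygon.  In particular, if $\alpha$ is a root of $f$ belonging to this factor and $\mathfrak{p}$ is the corresponding prime of $\Q(\alpha)$ above $p$, the normalized valuation satisfies $v_\mathfrak{p}(\alpha)=-b$.

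With this in hand, I would first show that for every prime $q$, $f(t^q)$ is irreducible over $\Q$.  Fixing $q$, use the hypothesis of Theorem~\ref{strong irreducibility theorem using several primes} to choose $p$ and an edge with $q\nmid b$.  Since $v_\mathfrak{p}(\alpha)=-b$ is not divisible by $q$, while any $q$-th power $\gamma^q\in\Q(\alpha)$ would have $v_\mathfrak{p}(\gamma^q)=qv_\mathfrak{p}(\gamma)$ divisible by $q$, we conclude $\alpha\notin \Q(\alpha)^q$.  Equivalently, $t^q-\alpha$ is irreducible over $\Q(\alpha)$ and therefore $f(t^q)$ is irreducible over $\Q$.

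To upgrade this to strong irreducibility I would invoke the Capelli--Vahlen theorem: for $K=\Q(\alpha)$ and any $k\ge 1$, $t^k-\alpha$ is irreducible over $K$ if and only if $\alpha\notin K^q$ for every prime $q\mid k$ and, when $4\mid k$, also $-4\alpha\notin K^4$.  The first family of conditions has just been established.  For the second, specialize to $q=2$ in the hypothesis to obtain $p$ and an edge with $b$ odd; then $v_\mathfrak{p}(-4\alpha)=2v_\mathfrak{p}(2)-b$ is odd (even minus odd), hence not divisible by $4$, so $-4\alpha$ cannot be a fourth power in $K$.  Thus $f(t^k)$ is irreducible for every $k$, and $f$ is strongly irreducible.

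The main obstacle is the local input of the first paragraph: one must verify that conditions (1)--(5) of Theorem~\ref{Eisenstein criterion for strong irreducibility} genuinely isolate the chosen edge of the Newton polygon and force it to contribute a single irreducible $\Q_p$-factor of degree $a$.  This is a standard but nontrivial fact about Newton polygons and the Eisenstein--Dumas criterion; once it is in hand, the global argument is essentially formal, apart from the small bookkeeping at $q=2$ needed to rule out the $-4K^4$ exception in Capelli--Vahlen.
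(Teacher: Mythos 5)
Your proposal is correct, but its route differs from the paper's in the step that passes from prime exponents to arbitrary exponents. Both arguments share the same local engine, which the paper establishes (citing Gouv\^ea, Theorem 6.4.7) inside the proof of Theorem~\ref{Eisenstein criterion for strong irreducibility} and reuses here: the chosen edge yields a factor $g$ of $f$ over $\Q_p$ of degree $a$ all of whose roots have valuation $\pm b/a$, and since $\gcd(a,b)=1$ this forces $g$ irreducible and $\Q_p(r)/\Q_p$ totally ramified of degree $a$ --- so the ``main obstacle'' you flag is exactly what the paper already proves, and deferring to it is legitimate (note that condition $p\nmid b$ from the Eisenstein-type theorem is not needed here, and indeed you never use it). Where you diverge: the paper reduces general $k$ to prime powers $q^h$ via the coprime-decomposition observation and then repeats the ramification-index estimate directly for $q^h$-th roots ($v_p(\sqrt[q^h]{r})=\pm b/(aq^h)$ forces $e$ to be a multiple of $aq^h$), staying entirely self-contained. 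You instead apply the valuation argument only at exponent $q$, concluding $\alpha\notin K^q$ for every prime $q$ (and, for the $4\mid k$ case, that $-4\alpha\notin K^4$ since $v_{\mathfrak{p}}(-4\alpha)=2v_{\mathfrak{p}}(2)\pm b$ is odd --- this bookkeeping is correct), and then invoke the Capelli--Vahlen theorem to get irreducibility of $t^k-\alpha$ over $K=\Q(\alpha)$ for all $k$, whence $f(t^k)$ is irreducible by the degree count. Your version buys a shorter local computation at the cost of importing a nontrivial classical theorem (with its exceptional $-4K^4$ case, which you correctly do not forget); the paper's version avoids any such citation and needs only standard Newton polygon and ramification facts, at the cost of the easy coprime-factorization lemma and redoing the valuation estimate at prime powers.
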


Again, we defer the proof until the end of the section.

\begin{corollary}\label{strong irreducibility several primes corollary}  Let $f(t)=c_dt^d+\cdots+c_1 t+c_0\in\Z[x]$, where $c_0$ and $c_1$ are relatively prime non-zero integers.  If $f$ is irreducible and $c_0\not=\pm \alpha^k$ for any integer $\alpha$ and natural number $k>1$, then $f$ is strongly irreducible.
\end{corollary}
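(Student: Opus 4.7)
The plan is to deduce this from Theorem~\ref{strong irreducibility theorem using several primes}: for every prime $q$ we must exhibit a prime $p$ and an edge $\big((i,v_p(c_i)),(i+a,v_p(c_i)+b)\big)$ of the Newton polygon of $f$ at $p$ with $\gcd(a,b)=1$ and $q\nmid b$. My strategy is to always take the \emph{leftmost} edge of the Newton polygon at a prime divisor of $c_0$, using the coprimality of $c_0$ and $c_1$ to make the required edge extremely easy to locate.

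Fix any prime $p$ dividing $c_0$. Since $\gcd(c_0,c_1)=1$, we have $v_p(c_0)\geq 1$ while $v_p(c_1)=0$. Because $f\in\Z[t]$ forces $v_p(c_k)\geq 0$ for every $k$, the point $(1,0)$ lies on the lower convex hull; and because there are no integer abscissas strictly between $0$ and $1$, the segment from $(0,v_p(c_0))$ to $(1,0)$ is automatically an edge of the Newton polygon. On this edge $a=1$ and $b=-v_p(c_0)$, so $\gcd(a,b)=1$ is immediate, and the only remaining hypothesis of Theorem~\ref{strong irreducibility theorem using several primes} that needs to be verified is $q\nmid v_p(c_0)$.

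The problem therefore reduces to the purely number-theoretic claim that for every prime $q$, some prime divisor $p$ of $c_0$ satisfies $q\nmid v_p(c_0)$. If this failed for a prime $q$, then $q\mid v_p(c_0)$ for every prime $p\mid c_0$; writing $v_p(c_0)=q\,m_p$ and setting $\alpha=\prod_{p\mid c_0}p^{m_p}$ we would obtain $c_0=\pm\alpha^q$. We may assume $|c_0|\geq 2$, since $c_0=\pm 1=\pm 1^2$ already violates the hypothesis; hence $\alpha\geq 2$, directly contradicting the assumption that $c_0$ is not $\pm\alpha^k$ for any integer $\alpha$ and $k>1$.

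The argument has essentially no obstacles: once one notices that the coprimality of $c_0$ and $c_1$ forces the leftmost edge of every Newton polygon at a prime divisor of $c_0$ to have horizontal extent exactly $1$, the coprimality and divisibility conditions become trivially manageable, and the only substantive step is the elementary observation that failure of the $q\nmid v_p(c_0)$ condition across all $p\mid c_0$ for a fixed prime $q$ forces $c_0$ to be a perfect $q$-th power (by unique factorization in $\Z$).
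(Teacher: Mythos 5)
Your proposal is correct and follows essentially the same route as the paper: apply Theorem~\ref{strong irreducibility theorem using several primes} using, for each prime $q$, a prime $p\mid c_0$ and the edge $\big((0,v_p(c_0)),(1,0)\big)$ of the Newton polygon (which exists because $\gcd(c_0,c_1)=1$ forces $v_p(c_1)=0$), with $a=1$, $b=-v_p(c_0)$, and the observation that $q\mid v_p(c_0)$ for all $p\mid c_0$ would force $c_0=\pm\alpha^q$. Your explicit handling of the case $c_0=\pm 1$ is a small extra care the paper leaves implicit, but the argument is the same.
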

\begin{proof} Let $c_0=\pm p_1^{e_1}p_2^{e_2}\cdots p_r^{e_r}$ be the prime factorization of $n$.  Then for any prime $q$, since $c_0$ is not a $q$th power up to sign, there is some $i$ for which $q\nmid e_i$.  At the prime $p=p_i$, the Newton polygon for $f$ has an edge $((0,e_i),(1,0))$, for which $a=1$ so $a$ and $b$ are certainly relatively prime, and $b=-e_i$ is not divisible by $q$.  Thus, the hypotheses of Theorem \ref{strong irreducibility theorem using several primes} are satisfied and $f$ is strongly irreducible.
\end{proof}
As in Corollary \ref{degnsimpleeisenstein}, we could replace $c_0$ and $c_1$ with $c_d$ and $c_{d-1}$ in the statement of Corollary \ref{strong irreducibility several primes corollary}, either by giving the analogous proof or by replacing $f$ with $t^df(t^{-1})$.

\begin{corollary}\label{twist knots strongly irreducible}
If a natural number $n$ is not a perfect power and $n\neq y(y+1)$ for any $y\in \Z$, then the Alexander polynomial
\[\Delta_{T_{n}}(t) = nt^2-(2n+1)t+n \]
is strongly irreducible. 
\end{corollary}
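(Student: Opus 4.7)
The plan is to apply Corollary~\ref{strong irreducibility several primes corollary} essentially directly to the polynomial $\Delta_{T_n}(t) = nt^2 - (2n+1)t + n$, whose coefficients are $c_0 = n$, $c_1 = -(2n+1)$, and $c_2 = n$. I will check the three hypotheses of that corollary in turn: irreducibility of $\Delta_{T_n}$ over $\Q$, coprimality and nonvanishing of $c_0$ and $c_1$, and that $c_0$ is not of the form $\pm \alpha^k$ for any integer $\alpha$ and any natural number $k>1$.

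First, for irreducibility I would compute the discriminant $(2n+1)^2 - 4n^2 = 4n+1$ and observe that it is a perfect square precisely when $n = y(y+1)$ for some $y \in \Z$, via the identity $4y(y+1)+1 = (2y+1)^2$. The assumption that $n$ is not of this form makes the discriminant a non-square in $\Q$, so the quadratic $\Delta_{T_n}$ has no rational roots and is irreducible over $\Q$; this recovers the irreducibility half of the statement from \cite{CG86} cited in the introduction. Second, $\gcd(n, 2n+1) = \gcd(n,1) = 1$, and both coefficients are clearly non-zero for $n \geq 1$, so $c_0$ and $c_1$ are coprime non-zero integers. Third, the assumption that $n$ is not a perfect power rules out $n = \alpha^k$ for $k > 1$; and since $n > 0$, the remaining case $n = -\alpha^k$ forces $\alpha < 0$ and $k$ odd, which gives $n = |\alpha|^k$, again a perfect power, contradicting the hypothesis.

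With all three hypotheses verified, Corollary~\ref{strong irreducibility several primes corollary} yields that $\Delta_{T_n}$ is strongly irreducible. The main point I want to emphasize is that there is essentially no obstacle at this stage: the work of developing the Newton-polygon-based criteria (Theorem~\ref{Eisenstein criterion for strong irreducibility} and Theorem~\ref{strong irreducibility theorem using several primes}) and their specialization to Corollary~\ref{strong irreducibility several primes corollary} was set up precisely so that the twist-knot family would fall out by a routine check of the constant term's prime factorization.
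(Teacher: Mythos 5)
Your proposal is correct and follows essentially the same route as the paper: both apply Corollary~\ref{strong irreducibility several primes corollary} to $\Delta_{T_n}$, using $\gcd(n,2n+1)=1$, the non-perfect-power hypothesis to rule out $c_0=\pm\alpha^k$, and the equivalence of $n\neq y(y+1)$ with irreducibility over $\Q$ (which you verify directly via the discriminant $4n+1$, where the paper instead cites the $D=1$ analysis in the proof of Theorem~\ref{examples of strong coprimality} --- an immaterial difference).
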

\begin{proof} This follows immediately from Corollary \ref{strong irreducibility several primes corollary}: 
the $n\neq y(y+1)$ condition is equivalent to the irreducibility of $\Delta_{T_{n}}(t)$ over $\Q$ (see the analysis of the $D=1$ case in the proof of Theorem \ref{examples of strong coprimality}).
\end{proof}

\begin{remark}
In the case where $n=\alpha^2$ is a perfect square, the polynomial $\Delta_{T_{n}}(t)$ fails to be strongly irreducible:
\[\Delta_{T_{n}}(t^2) = \alpha^2 t^4 - (2\alpha^2+1)t^2+\alpha^2 = (\alpha t^2-t-\alpha)(\alpha t^2+t-\alpha).\] 

In fact, in this case, the $n$-twist knot is rationally algebraically slice.  In order to see this, we check that Cha's three invariants $s$, $e$, and $d$ all vanish.  If $s$ were nonzero then for some $a=(a_j)\in P_0$, the invariant $s_{(a)}(T_n)$ would not be identically zero.  Thus, $s_{a_j}(T_n)$ would be nonzero for some $a_j$, implying that $a_j$ is a root of $\Delta_{T_n}$.  Since $a\in P_0$, the term $a_{2j}$ is a square root of $a_j$, and hence must satisfy one of the irreducible factors of $\Delta_{T_n}(t^2)$.  But neither of these factors are symmetric, which implies that $a_{2j}$ is not reciprocal, contradicting that $(a)$ is in $P_0$.  The argument that $d$ and $e$ vanish is the same.

On the other hand, when $n$ is a perfect power but not a perfect square, we have found no examples where $\Delta_{T_{n}}(t)$ fails to be strongly irreducible, but it seems the techniques of this section can't easily be applied in this case in general.  

For example, when $n=6^3=216$, the polynomial $\Delta_{T_{216}}(t^3)$ has roots in both $\Q_2$ and $\Q_3$, i.e. in $\Q_p$ for every $p$ at which the Newton polygon is not simply a single horizontal segment.  On the other hand, $\Delta_{T_{216}}(t)$ is nevertheless strongly irreducible: applying Theorem \ref{Eisenstein criterion for strong irreducibility} to $\Delta_{T_{216}}(t)$ at $p=2$, we see that we need only check that $\Delta_{T_{216}}(t^3)$ is irreducible over $\Q$, which is easily checked by computer (in fact, it's irreducible mod 11).
\end{remark}

Applying Corollary~\ref{injection from algebraic to rational algebraic}, the above discussion yields the following:
\begin{corollary}\label{rational concordance of twist knots}
If $K$ is the kernel of the map from the subgroup generated by $\{T_n|~n>0\}$ in the algebraic concordance group
 to the rational algebraic concordance group, then $$\langle\{T_n|~n\text{ is a perfect square}\}\rangle\subseteq K \subseteq \langle\{T_n|~n\text{ is a perfect power}\}\rangle.$$
\end{corollary}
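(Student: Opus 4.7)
The plan is to prove the two inclusions separately, combining the preceding Remark with Theorem~\ref{examples of strong coprimality}, Proposition~\ref{coprime to rat alg conc}, and Corollary~\ref{injection from algebraic to rational algebraic}. The lower inclusion $\langle\{T_n\mid n\text{ is a perfect square}\}\rangle \subseteq K$ is immediate from the Remark, which shows that each such $T_n$ is rationally algebraically slice.

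For the upper inclusion, let $H = \langle\{T_n\mid n\text{ is a perfect power}\}\rangle$, take an arbitrary $x \in K$ expressed as a finite combination $x = \sum a_n T_n$ in $G$, and split it as $x = s \# h$, where $s$ is the connected sum of the terms $a_n T_n$ with $n$ not a perfect power and $h$ collects those with $n$ a perfect power (so that $h \in H$ automatically). The task reduces to showing that $s = 0$ in the algebraic concordance group, which would give $x = h \in H$. Since the two index sets are disjoint, pairwise strong coprimality from Theorem~\ref{examples of strong coprimality}, together with the standard fact that coprimality in a UFD passes to products, gives that $\Delta_s$ and $\Delta_h$ are themselves strongly coprime. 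Proposition~\ref{coprime to rat alg conc} applied to $x = s \# h \in K$ then forces both $s$ and $h$ to be rationally algebraically slice; in particular $s \in K$.

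To finish, recall that the non-perfect-power twist knots with $n \neq y(y+1)$ have strongly irreducible Alexander polynomials by Corollary~\ref{twist knots strongly irreducible}, and these polynomials are pairwise distinct up to the substitutions $f(t) \mapsto \pm f(t^k)$ (the degree $2k$ rules out $k > 1$, and for $k = 1$ the positive leading coefficient $n$ pins down the index). Thus Corollary~\ref{injection from algebraic to rational algebraic} applies, and the subgroup $G_1$ generated by these knots injects into the rational algebraic concordance group. Since any $T_{y(y+1)}$ appearing in $s$ is algebraically slice and hence already zero in $G$, the element $s$ lies in $G_1$; combined with $s \in K$, injectivity forces $s = 0$. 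Hence $x = h \in H$, completing the upper inclusion. The main subtlety to verify is that the strong coprimality hypothesis of Proposition~\ref{coprime to rat alg conc} transfers from pairs of twist-knot Alexander polynomials to the composite polynomials $\Delta_s$ and $\Delta_h$, but this reduces to a routine unique-factorization argument in $\Q[t]$.
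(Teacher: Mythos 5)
Your proof is correct and follows essentially the paper's intended argument: the paper derives this corollary directly from the preceding Remark (perfect-square twist knots are rationally algebraically slice), Theorem~\ref{examples of strong coprimality} with Proposition~\ref{coprime to rat alg conc} (to split a rationally algebraically slice combination), and Corollaries~\ref{twist knots strongly irreducible} and~\ref{injection from algebraic to rational algebraic} (to kill the non-perfect-power part), which are exactly your ingredients. Your only repackaging is lumping the summands into $s\# h$ and applying Proposition~\ref{coprime to rat alg conc} once to the product polynomials rather than invoking the splitting summand-by-summand, which is a harmless variation.
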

\par\noindent Numerical evidence suggests that $K=\langle\{T_n|~n\text{ is a perfect square}\}\rangle$.

\begin{proof}[Proof of Theorem \ref{Eisenstein criterion for strong irreducibility}]
Let $\Q(r)$ be the extension of $\Q$ obtained by adjoining a root $r$ of the irreducible polynomial $f$.  Then $[\Q(r):\Q]=d$, and to show that the degree $xd$ polynomial $f(t^x)$ is irreducible over $\Q$, it suffices to show that $[\Q(\sqrt[x]{r}):\Q(r)]=x$ for some $x$th root $\sqrt[x]{r}$ of $r$.  If $x=yz$, then 
\[ [\Q(\sqrt[x]{r}) : \Q(r)]=[\Q(\sqrt[x]{r}):\Q(\sqrt[y]{r})] [\Q(\sqrt[y]{r}) : \Q(r)], \]
 where $[\Q(\sqrt[x]{r}):\Q(\sqrt[y]{r})]\leq z$ and $[\Q(\sqrt[y]{r}) : \Q(r)]\leq y$.  Therefore, if $f(t^x)$ is irreducible, or equivalently $[\Q(\sqrt[x]{r}) : \Q(r)]=x$, then $[\Q(\sqrt[y]{r}) : \Q(r)]=y$ and $f(t^y)$ is irreducible as well.  
 
 Thus, to show $f$ is strongly irreducible, it suffices to show that $[\Q(\sqrt[x]{r}) : \Q(r)]=x$ when $x=|b|z$ is a multiple of $b$, and since we are assuming that $f(t^{|b|})$ is irreducible, it suffices to show that $[\Q(\sqrt[x]{r}) : \Q(\sqrt[|b|]{r})]=z$.

Now, applying standard results on Newton polygons (see \cite{Gou} 6.4, especially Theorem 6.4.7), we see that over the $p$-adics, $f$ has a factor $g\in\Q_p[t]$ of degree $a$, such that every root $\widetilde{r}\in\overline{\Q_p}$ of $g$ satisfies $v_p(\widetilde{r})=m=b/a$,
where the valuation $\fun{v_p}{\overline{\Q_p}}{\Q\cup\{+\infty\}}$ is the extension of $\fun{v_p}{\Q_p}{\Z\cup\{+\infty\}}$.  Since $f$ is irreducible over $\Q$ and $\overline{\Q_p}$ is algebraically closed, there exists an inclusion $\Q(r)\hookrightarrow \overline{\Q_p}$ taking $r$ to $\widetilde{r}$.  Thus, we identify $\Q(r)$ with a subfield of $\overline{\Q_p}$, where $r$ is a root of $g$.  

Now, since $a$ and $b$ are relatively prime and $v_p(r)=b/a$, the ramification index  $e\left(\Q_p(r)/\Q_p\right)$ is a multiple of $a$, but $r$ is a root of a polynomial $g$ of degree $a$.  Therefore $g$ is irreducible and $e\left(\Q_p(r)/\Q_p\right)=a$.  (For basic facts on ramified and unramified extensions and ramification index, see \cite{Gou} 5.4.)

Let $\pi$ be a uniformizer of $\Q_p(r)$.  Then $\Q_p(\sqrt[|b|]{r})=\Q_p\left(r, \sqrt[|b|]{r \pi^{-b}}\right)$, so since $v_p(r\pi^{-b})=0$ and $p\nmid b$, the extension 
$\Q_p(\sqrt[|b|]{r})/\Q_p(r)$ is unramified.  On the other hand, $v_p(\sqrt[x]{r})=\frac{m}{x}=\frac{b}{az|b|}=\pm \frac{1}{az}$, so $e\left(\Q_p(\sqrt[x]{r})/\Q_p\right)$ is a multiple of $az$.  Now, since
\[e\left(\Q_p(\sqrt[x]{r})/\Q_p\right)=e\left(\Q_p(\sqrt[x]{r})/\Q_p(\sqrt[|b|]{r})\right)\cdot e\left(\Q_p(\sqrt[|b|]{r})/\Q_p(r)\right)\cdot e\left(\Q_p(r)/\Q_p\right), \]
with $e\left(\Q_p(\sqrt[|b|]{r})/\Q_p(r)\right)=1$ and $e\left(\Q_p(r)/\Q_p\right)=a$, we find that $e\left(\Q_p(\sqrt[x]{r})/\Q_p(\sqrt[|b|]{r})\right)$ is a multiple of $z$.  Therefore 
$[\Q_p(\sqrt[x]{r}):\Q_p(\sqrt[|b|]{r})]$ is a multiple of $z$, and hence $[\Q(\sqrt[x]{r}):\Q(\sqrt[|b|]{r})]$ is a multiple of $z$, and since this extension is obtained by taking a $z$th root, we have $[\Q(\sqrt[x]{r}):\Q(\sqrt[|b|]{r})]=z$, as desired.
\end{proof}

\begin{proof}[Proof of Theorem \ref{strong irreducibility theorem using several primes}]
Let $\Q(r)$ be the extension of $\Q$ obtained by adjoining a root $r$ of the irreducible polynomial $f$.  As in the proof of Theorem \ref{Eisenstein criterion for strong irreducibility}, we must show that for every natural number $x$,  we have $[\Q(\sqrt[x]{r}):\Q(r)]=x$ for some $x$th root $\sqrt[x]{r}$ of $r$.

We observe that if $x=yz$, then $\Q(\sqrt[x]{r})$ contains subfields $\Q(\sqrt[y]{r})$ and $\Q(\sqrt[z]{r})$.  Thus, if $[\Q(\sqrt[y]{r}):\Q(r)]=y$ and $[\Q(\sqrt[z]{r}):\Q(r)]=z$, with $y$ and $z$ relatively prime, we could conclude that $[\Q(\sqrt[x]{r}):\Q(r)]=x$.  It therefore suffices to prove that $[\Q(\sqrt[x]{r}):\Q(r)]=x$ in the case where $x=q^h$ is a prime power.

Now, let $p$ be the prime associated to $q$ in the hypotheses of the theorem.  As in the proof of Theorem \ref{Eisenstein criterion for strong irreducibility}, we may conclude that over $\Q_p$, the polynomial $f$ has a factor $g$ whose roots $\widetilde{r}\in \overline{\Q_p}$ all satisfy $v_p(\widetilde{r})=b/a$.  Again, since $f$ is irreducible over $\Q$ we can identify $\Q(r)$ with a subfield of $\overline{\Q_p}$ in which $r$ is a root of $g$.  Again, since $a$ and $b$ are relatively prime, $g$ is irreducible over $\Q_p$ and $\Q_p(r)/\Q_p$ is totally ramified of degree $a$.

Now, since $v_p(\sqrt[q^h]{r}) = \frac{b}{aq^h}$ and $q\nmid b$, the ramification index $e\left(\Q_p(\sqrt[q^h]{r})/\Q_p\right)$ is a multiple of $aq^h$.  Therefore, since 
\[e\left(\Q_p(\sqrt[q^h]{r})/\Q_p\right) = e\left(\Q_p(\sqrt[q^h]{r})/\Q_p(r)\right) \cdot e\left(\Q_p(r)/\Q_p\right),\] 
and $e\left(\Q_p(r)/\Q_p\right)=a$, we conclude that $e\left(\Q_p(\sqrt[q^h]{r})/\Q_p(r)\right)$ is a multiple of $q^h$ and hence $[\Q_p(\sqrt[q^h]{r}):\Q_p(r)]$ and $[\Q(\sqrt[q^h]{r}):\Q(r)]$ are multiples of $q^h$.   Since $[\Q(\sqrt[q^h]{r}):\Q(r)]\leq q^h$, we conclude $[\Q(\sqrt[q^h]{r}):\Q(r)]=q^h$ as desired.
\end{proof}

\section{Application: making the \cite{CHL10} construction of knots deep in the solvable filtration explicit.}\label{infection}

 In \cite{COT03}, Cochran, Orr and Teichner define a filtration of the knot concordance group indexed by half-integers:
$$\dots\subseteq\F_{1.5}\subseteq\F_1\subseteq\F_{.5}\subseteq\F_0\subseteq \C.$$
In \cite{CHL10}, Cochran, Harvey and Leidy perform an iterated infection process along a particular class of ribbon knots to yield subgroups of $\F_n/\F_{n.5}$ isomorphic to $\Z^\infty$.  We recall their construction:

\begin{definition*}[Definition 7.2 of \cite{CHL10}]
Given a ribbon knot $R$ in $S^3$ and an unknotted curve $\eta$ in $S^3-R$ such that the linking number $\lnk(R,\eta)=0$, the pair $(R,\eta)$, also written $R_\eta$, is called a \emph{robust doubling operator} if:
\begin{enumerate}
\item the rational Alexander module of $R$, $A_0(R)$ is cyclic, generated by $\eta$, and $A_0(R)\cong \frac{\Q[t,t^{-1}]}{\delta(t)\delta(t^{-1})}$ for a prime polynomial $\delta$, and
\item for each isotropic submodule $P$ of $A_0(R)$, with the first order signature corresponding to $P$, $\rho(R,\phi_P)$, vanishes or $P$ corresponds to a ribbon disk for $R$.
\end{enumerate}
\end{definition*}

The justification for calling the pair $(R,\eta)$ an operator is a procedure called \emph{infection} which takes a doubling operator $R_\eta$ and a knot $J$ and produces a knot $R_\eta(J)$ by cutting the strands of $R$ which pass through a disk bounded by $\eta$ and tying them into the knot $J$, as is indicated in Figure~\ref{fig:infection}.

 \begin{figure}[htbp]
\setlength{\unitlength}{1pt}
\begin{picture}(150,100)
\put(0,0){\includegraphics[width=.07\textwidth]{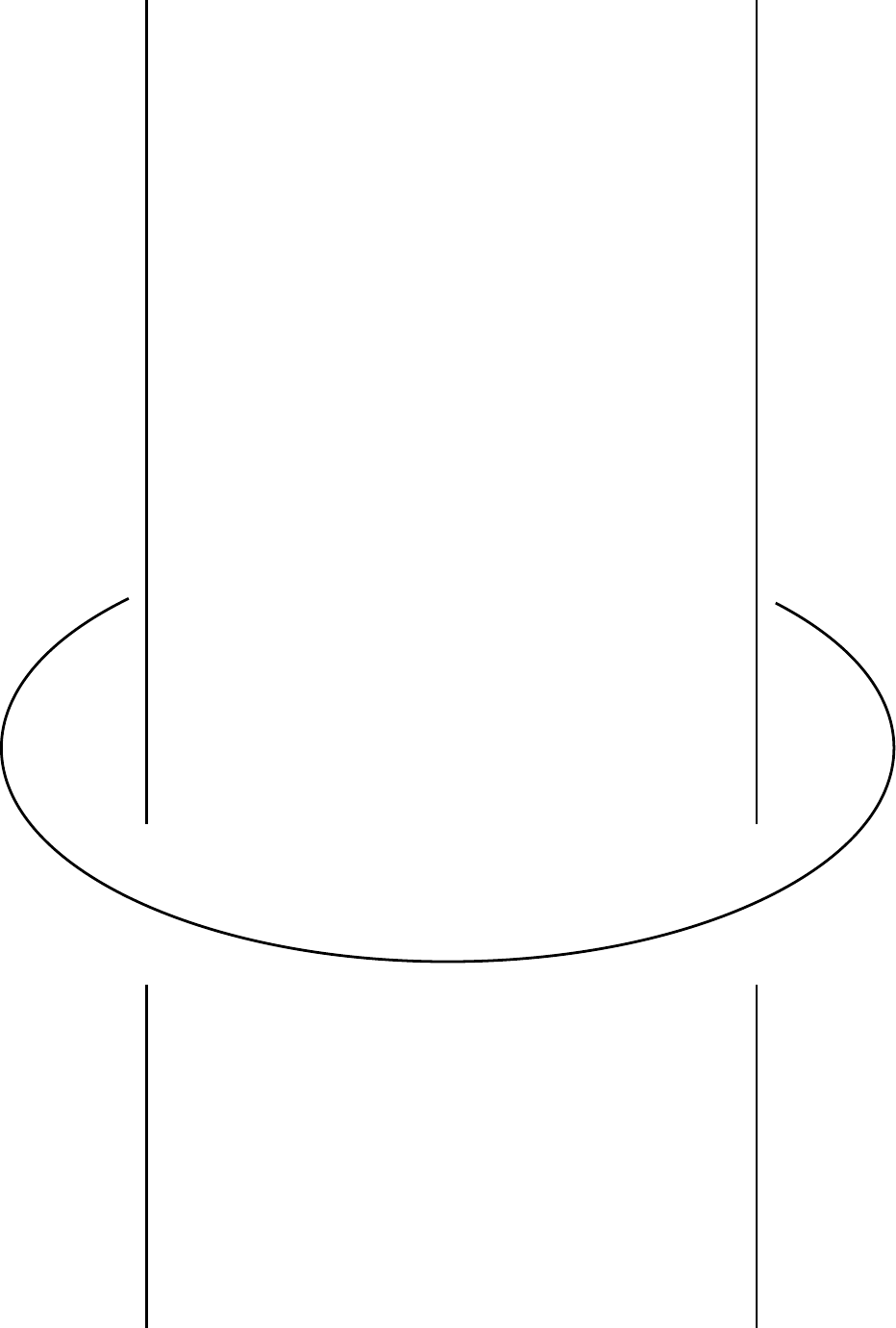}}
\put(90,0){\includegraphics[width=.1\textwidth]{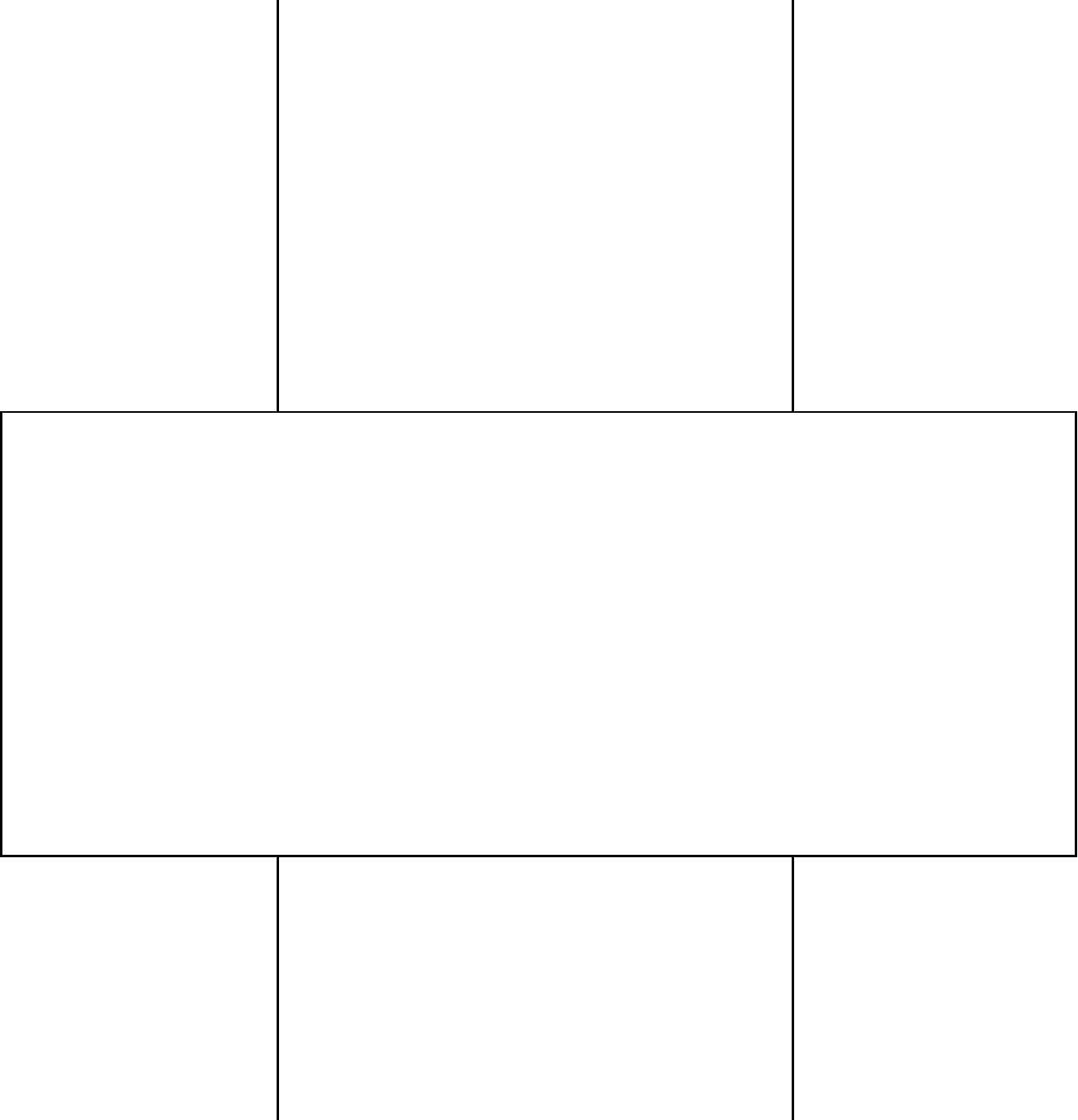}}
\put(15,20){$\eta$}
\put(110,15){$J$}

\end{picture}
\caption{Left: A portion of a knot $R$ going through a disk bounded by $\eta$.  Right: the knot $R_\eta(J)$}\label{fig:infection}
\end{figure}

Cochran, Harvey, and Leidy provide examples of robust doubling operators and prove the following theorem:

\begin{theorem*}[Theorem 7.7, \cite{CHL10}]
Let $I$ be an arbitrary indexing set.  Take $\{\mathcal{Q}_i=(q_{i,1}, \dots, q_{i,n})\}_{i\in I}$ to be a collection of $n$-tuples of polynomials which are termwise strongly coprime, that is, for all $i,j\in I$ and each $m$, $q_{i,m}$ and $q_{j,m}$ are strongly coprime.  Take $\{\mathcal{R}_i=(R^{i,n}_{\alpha_{i,n}}\circ\dots\circ R^{i,1}_{\alpha_{i,1}}\}_{i\in I}$ to be a set of iterated robust doubling operators such that $\Delta_{R_{i,m}}=q_{i,m}$.  For each $i\in I$, take $\mathcal{K}_i=\{K_{i,j}\}_{j=1}^\infty$ to be a sequence of knots with vanishing Arf-invariants, such that for each $i\in I$, the set of integrals of the Tristram-Levine signature functions $\{\rho_0(K_{i,j})\}_{j=1}^\infty$, is linearly independent of the first order signatures of $R_{i,1}$.  Then the set
$$
\{\mathcal{R}_i(K_{i,j})|~i\in I, 1\le j<\infty\}
$$
is linearly independent in $\F_n/\F_{n.5}$.
\end{theorem*}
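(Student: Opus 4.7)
The statement is quoted verbatim as Theorem~7.7 of \cite{CHL10}, so my plan is to cite the argument given there and to focus instead on what this section needs to add: verification that the hypotheses can actually be satisfied by an explicit infinite family. The \cite{CHL10} argument proceeds by contradiction. Suppose $J = \#_{(i,j)}\, a_{i,j}\,\mathcal{R}_i(K_{i,j})$ lies in $\F_{n.5}$. Using the Cochran--Orr--Teichner obstruction theory from \cite{COT03}, one produces a representation of $\pi_1(M_J)$ into a poly-(torsion-free abelian) group of derived length $n+1$ whose associated Cheeger--Gromov $L^2$-$\rho$-invariant is forced to vanish; the goal is then to compute this $\rho$-invariant directly and show that, under the hypotheses, it is nonzero.

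First I would set up the higher-order Alexander module calculation. At each level $m$ of the iteration, localizing the $n$th order module at the prime polynomial $q_{i,m}$ kills the contribution of every $\mathcal{R}_{i'}$ with $i'\neq i$; this is exactly where termwise strong coprimality of $\{q_{i,m}\}_{i\in I}$ is used. Iterating level by level produces, for each fixed $(i,m)$, a distinguished isotropic submodule of the higher-order Blanchfield form, to which the robust doubling hypothesis of Definition~7.2 may be applied: by assumption each such submodule either has vanishing first-order $\rho$-invariant or is carried by a ribbon disk of $R^{i,m}_{\alpha_{i,m}}$ (which again yields no $\rho$-contribution). Applying additivity of Cheeger--Gromov $\rho$-invariants under connected sum then peels off the infection layers and reduces the calculation to a $\Z$-linear combination of the integrals $\rho_0(K_{i,j})$ together with the first-order signatures of $R^{i,1}_{\alpha_{i,1}}$, and the linear independence hypothesis delivers the contradiction.

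The hard part is not the combinatorial bookkeeping above but the underlying CHL machinery that controls higher-order Alexander modules across iterated infections and constructs the required representations into solvable groups; this is exactly what \cite{CHL10} provides, and I would simply cite it. The genuine contribution of the present paper is therefore not the proof of the theorem itself but the input it requires: Theorem~\ref{examples of strong coprimality} supplies an explicit infinite family of strongly coprime polynomials (the Alexander polynomials of twist knots), and combining these with the robust doubling operators of \cite{CHL10} instantiates Theorem~7.7 above with concrete, linearly independent representatives in $\F_n/\F_{n.5}$.
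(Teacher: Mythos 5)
Your proposal is correct and matches the paper's treatment: the paper does not reprove Theorem~7.7 but cites it from \cite{CHL10}, exactly as you do, and its actual contribution is to verify the hypotheses explicitly --- strong coprimality of the twist-knot Alexander polynomials (Theorem~\ref{examples of strong coprimality}) and the construction of the robust doubling operators $R^b_\eta$ in Section~\ref{infection}. Your sketch of the \cite{CHL10} argument (localization at the strongly coprime primes, the robustness condition on isotropic submodules, and additivity of $\rho$-invariants reducing to the $\rho_0(K_{i,j})$) is consistent with that cited proof, so nothing further is needed.
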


From here on we refer to the integral of the Tristram-Levine signature function of a knot as its $\rho_0$-invariant.  For a discussion of von Neumann $\rho$-invariants, see \cite{COT03}.

The doubling operators that Cochran, Harvey, and Leidy construct, $R^{i,m}_{\alpha_{i,m}}$, satisfy all of the needed conditions and have only one non-zero first order signature.  By picking the sequences $\mathcal{K}_{i}$ to consist of knots with linearly independent $\rho_0$-invariants, the addition of this first order signature introduces at most one relation.  Thus, by removing at most one term from the sequence $\mathcal{K}_i$, the linear independence condition can be met.  

 Unfortunately, the nonzero first order signatures are difficult to compute, and so it is not yet known which $K_{i,j}$ must be removed from the sequence $\mathcal{K}_i$ to achieve this independence.  For a different set of robust doubling operators, we circumvent  this difficulty.
 
 Let $J_0, J_1,\ldots$ be a sequence of knots with vanishing Arf-invariant and linearly independent $\rho_0$-invariant.  \cite[Theorem 2.6]{COT04} provides such a sequence.  
 
 \begin{figure}[htbp]
\setlength{\unitlength}{1pt}
\begin{picture}(150,200)
\put(-90,0){\includegraphics[width=.35\textwidth]{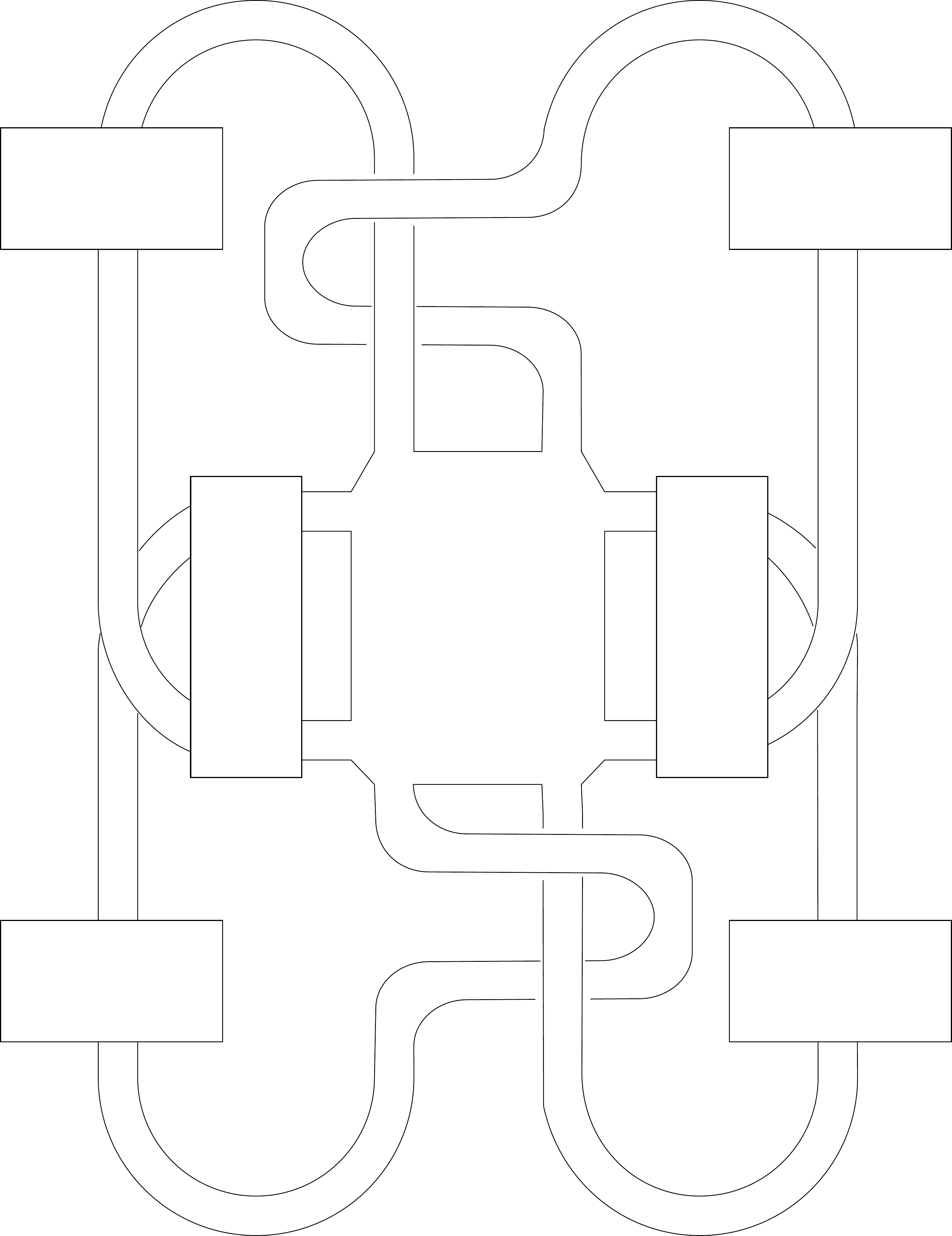}}
\put(90,0){\includegraphics[width=.35\textwidth]{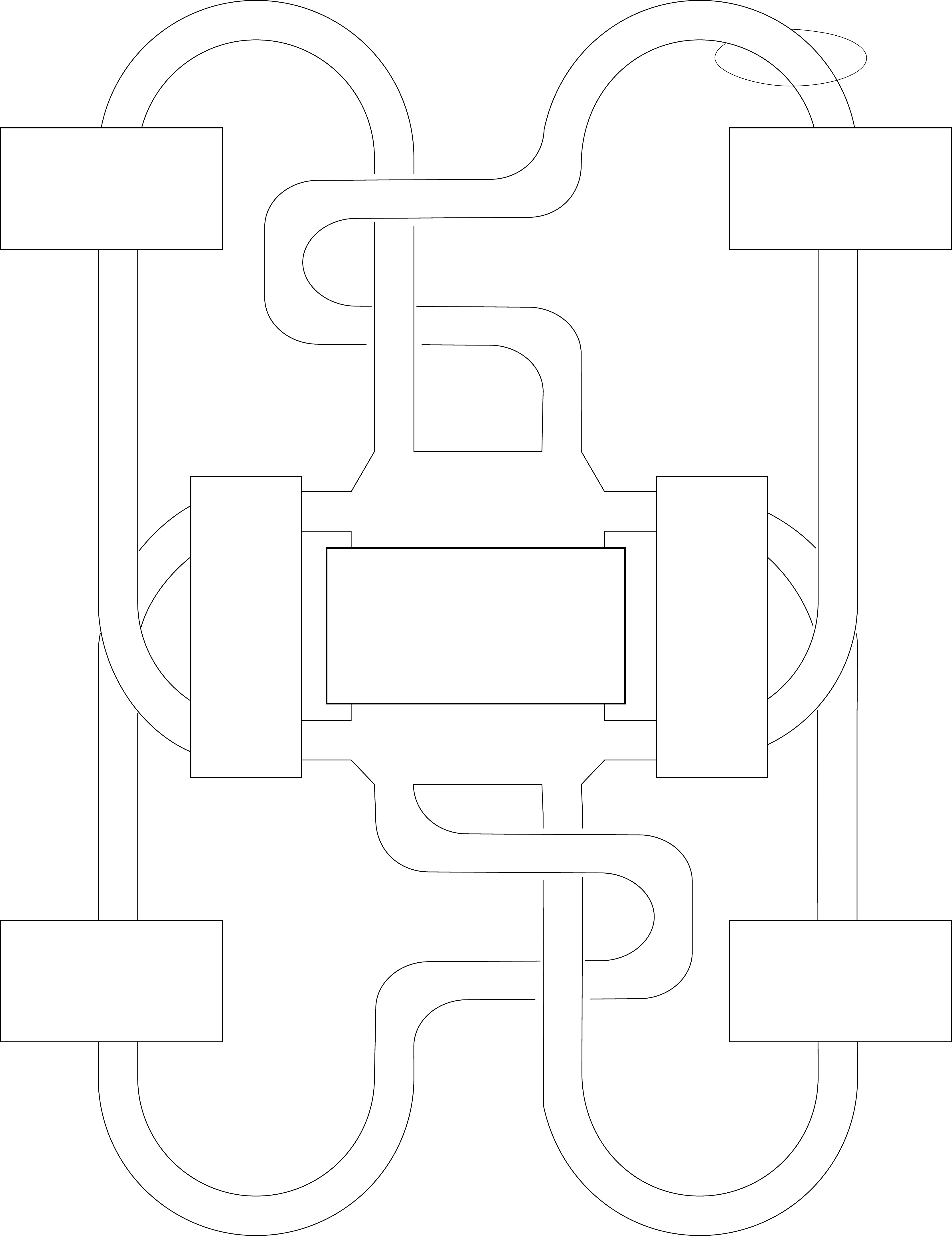}}
\put(225,190){$\eta$}
\put(160,95){$J_0$}
\put(-58,95){$+b$}
\put(15,95){$-b$}
\put(121,95){$+b$}
\put(195,95){$-b$}
\put(-88,162){$+b+1$}
\put(-88,37){$+b-1$}

\put(27,162){$-b+1$}
\put(27,37){$-b-1$}

\put(91,162){$+b+1$}
\put(91,37){$+b-1$}

\put(207,162){$-b+1$}
\put(207,37){$-b-1$}

\end{picture}
\caption{On the left: The ribbon knot $\widetilde{R}^b$.  On the right: the robust doubling operator $R^b_\eta$ gotten by infecting $\widetilde{R}^b$ by $J_0$.  The $\pm b$ indicates $b$ positive or negative full twists between bands without adding twists in either band.}\label{fig:slice knots}
\end{figure}

 For each positive integer $b$, consider the amphichiral ribbon knot $\widetilde{R}^b$ depicted on the left hand side of Figure~\ref{fig:slice knots}.  It has Alexander module $A_0(\widetilde{R}^b)=\frac{\Q[t,t^{-1}]}{\langle\delta_b(t)^2\rangle}$ where \[\delta_b(t)=bt^2-(1+2b)t+b.\]  This module has only two isotropic submodules, $P=\langle\delta_b\rangle$ and $0$.  The corresponding $\rho$-invariants both vanish: $\rho(\widetilde{R}^b, \phi_P)=0$ since $P$ corresponds to a ribbon disk, and $\rho(\widetilde{R}^b,\phi_0)=0$ by \cite[Proposition 4.5]{CHL08} since $\widetilde{R}^b$ is amphichiral.

 Of course this is problematic: the submodule $0$ cannot correspond to a ribbon disk.  No choice of $\eta$ will make $\widetilde{R}^b_\eta$ a robust doubling operator.  With that in mind we perform the infection by $J_0$ depicted on the right hand side of Figure~\ref{fig:slice knots}.    Calling the resulting ribbon knot $R^b$, we see that $\rho(R^b,\phi_P)=0$ since $R^b$ is still ribbon and $P$ corresponds to a ribbon disk for $R^b$.  By \cite[Proposition 3.2]{COT04}, $\rho(R^b,\phi_0)=\rho_0(J_0)$ which in particular is nonzero.  Thus, if $\eta$ is any unknotted curve representing a generator of $A_0(R^b)$ (such as the one depicted in Figure~\ref{fig:slice knots}), $R^b_\eta$ is a robust doubling operator
 
Notice that since $\delta_b = \Delta_{T_b}$ is exactly the Alexander polynomial of the $b$-twist knot, Theorem~\ref{examples of strong coprimality} asserts that the polynomials $\Delta_{R_a}$ and $\Delta_{R_b}$ are strongly coprime for $a\not= b$. 

Thus, we have an explicit set of knots fulfilling the conditions of \cite[Theorem 7.7]{CHL10}.  By applying the theorem, we get the following linearly independent sets in $\F_n/\F_{n.5}$:

\begin{theorem}   Let $J_0, J_1,\dots$ be a sequence of knots with vanishing Arf-invariant and linearly independent $\rho_0$-invariants. 
Let $A=\{(a_{i,1},\ldots, a_{i,n})\}$ be a collection of $n$-tuples of positive integers such that for all $i\neq j$ and $1\leq m\leq n$, we have $a_{i,m}\neq a_{j,m}$.  Given $a=(a_{i,1},\dots, a_{i,n})\in A$, let $\mathcal{R}_a$ be the iterated doubling operator $R^{a_{i,n}}_{\eta}\circ\dots\circ R^{a_{i,1}}_{\eta}$, where the $R^{a_{i,m}}_\eta$ are the robust doubling operators defined above.  Then the set
$$
\{\mathcal{R}_a(J_j)|~a\in A, 1\le j<\infty\}.
$$
is linearly independent in $\F_n/\F_{n.5}$.
\end{theorem}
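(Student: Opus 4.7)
The plan is to apply \cite[Theorem 7.7]{CHL10} directly to the family $\{\mathcal{R}_a(J_j)\}$, so it suffices to verify the three hypotheses of that theorem for our specific construction: (i) each constituent operator $R^{a_{i,m}}_\eta$ is a robust doubling operator; (ii) the $n$-tuples $\bigl(\Delta_{R^{a_{i,1}}},\dots,\Delta_{R^{a_{i,n}}}\bigr)$ are termwise strongly coprime as $i$ varies over the indexing set of $A$; and (iii) for each $a \in A$, the sequence $\{\rho_0(J_j)\}_{j=1}^\infty$ is linearly independent of the first-order signatures of $R^{a_{i,1}}_\eta$.

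For (i), the robust doubling operator property of each $R^b_\eta$ is essentially established in the two paragraphs preceding the theorem statement: the Alexander module $A_0(\widetilde{R}^b) \cong \Q[t,t^{-1}]/\langle \delta_b(t)^2\rangle$ has only two isotropic submodules, namely $0$ and $P = \langle \delta_b\rangle$; the infection by $J_0$ preserves both the ribbon disk corresponding to $P$ and the module structure; by \cite[Proposition~3.2]{COT04} we have $\rho(R^b,\phi_0) = \rho_0(J_0) \neq 0$, while $\rho(R^b,\phi_P) = 0$ since $P$ still corresponds to a ribbon disk; and the curve $\eta$ shown in Figure~\ref{fig:slice knots} is unknotted in $S^3$, has linking number zero with $R^b$, and generates $A_0(R^b)$. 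I would collect these observations explicitly to state that $R^b_\eta$ satisfies the definition of robust doubling operator.

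For (ii), the Alexander polynomial of $R^b$ is (up to a unit) $\delta_b(t)^2 = \Delta_{T_b}(t)^2$, so termwise strong coprimality of the $\Delta_{R^{a_{i,m}}}$ is equivalent to termwise strong coprimality of $\Delta_{T_{a_{i,m}}}$; indeed, any common root of $\Delta_{R^{a_{i,m}}}(t^k)$ and $\Delta_{R^{a_{j,m}}}(t^l)$ is a common root of $\Delta_{T_{a_{i,m}}}(t^k)$ and $\Delta_{T_{a_{j,m}}}(t^l)$. Since the hypothesis gives $a_{i,m} \neq a_{j,m}$ for $i \neq j$, Theorem~\ref{examples of strong coprimality} supplies exactly the needed strong coprimality.

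For (iii), the only nonvanishing first-order signature of $R^{a_{i,1}}_\eta$ is $\rho(R^{a_{i,1}}_\eta,\phi_0) = \rho_0(J_0)$, so the required linear independence reduces to the statement that $\rho_0(J_1), \rho_0(J_2), \ldots$ are linearly independent of $\rho_0(J_0)$ over $\Q$, which is immediate from the choice of the sequence $J_0, J_1, \ldots$. In particular, no term needs to be removed from $\mathcal{K}_a$, in contrast to the situation faced in \cite{CHL10}. I do not expect a serious obstacle: the crux of the argument is the observation, already carried out in the preceding discussion, that the amphichiral construction of $\widetilde{R}^b$ followed by a single infection by $J_0$ limits the set of nontrivial first-order signatures to the single value $\rho_0(J_0)$, which is precisely what makes the linear independence condition automatic. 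With (i), (ii), and (iii) in hand, the conclusion follows from \cite[Theorem~7.7]{CHL10}.
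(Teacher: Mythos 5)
Your proposal is correct and follows essentially the same route as the paper: the paper's proof is precisely the discussion preceding the theorem, namely verifying that each $R^b_\eta$ is a robust doubling operator whose only nonzero first-order signature is $\rho_0(J_0)$, invoking Theorem~\ref{examples of strong coprimality} for the termwise strong coprimality of the $\delta_b=\Delta_{T_b}$, and then applying \cite[Theorem 7.7]{CHL10}. Your three-step verification reproduces this argument, including the key observation that the linear independence hypothesis is automatic so no infection knot needs to be discarded.
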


\newpage
  \begin{bibdiv}[References]
\begin{biblist}[\normalsize]
\setlength{\itemsep}{15pt}

\bib{Cha07}{article}{
   author={Cha, Jae Choon},
   title={The structure of the rational concordance group of knots},
   journal={Mem. Amer. Math. Soc.},
   volume={189},
   date={2007},
   number={885},
   pages={x+95},
}

\bib{COT03}{article}{
    AUTHOR = {Cochran, Tim},
    author = {Orr, Kent},
    author = {Teichner, Peter},
     TITLE = {Knot concordance, {W}hitney towers and {$L\sp 2$}-signatures},
  JOURNAL = {Annals of Mathematics. Second Series},
    VOLUME = {157},
      YEAR = {2003},
    NUMBER = {2},
     PAGES = {433--519},
}

\bib{COT04}{article}{
   author={Cochran, Tim},
   author={Orr, Kent},
   author={Teichner, Peter},
   title={Structure in the classical knot concordance group},
   journal={Comment. Math. Helv.},
   volume={79},
   date={2004},
   number={1},
   pages={105--123},
   issn={0010-2571},
   review={\MR{2031301 (2004k:57005)}},
   doi={10.1007/s00014-001-0793-6},
}

\bib{CHL10}{article}{
   author={Cochran, Tim},
   author= {Harvey, Shelly},
   author= {Leidy, Constance},
   title={Primary Decomposition and the Fractal Nature of Knot Concordance},
   journal={Mathematische Annalen},
   date={2010-11-22},
   pages={1--66},
}

\bib{CHL08}{article}{
   author={Cochran, Tim},
   author={Harvey, Shelly},
   author={Leidy, Constance},
   title={Link concordance and generalized doubling operators},
   journal={Algebr. Geom. Topol.},
   volume={8},
   date={2008},
   number={3},
   pages={1593--1646},
   issn={1472-2747},
   review={\MR{2443256 (2009h:57014)}},
   doi={10.2140/agt.2008.8.1593},
}

\bib{algebraicnumbertheory}{book}{
   author={Fr{\"o}hlich, A.},
   author={Taylor, M. J.},
   title={Algebraic number theory},
   series={Cambridge Studies in Advanced Mathematics},
   volume={27},
   publisher={Cambridge University Press},
   place={Cambridge},
   date={1993},
}

\bib{Gou}{book}{
   author={Gouv{\^e}a, Fernando Q.},
   title={$p$-adic Numbers},
   series={Universitext},
   edition={2},
   subtitle={An Introduction},
   publisher={Springer-Verlag},
   place={Berlin},
   date={1997},
}

\bib{Le}{article}{
   author={Levine, J.},
   title={Invariants of knot cobordism},
   journal={Invent. Math. 8 (1969), 98--110; addendum, ibid.},
   volume={8},
   date={1969},
   pages={355},
   issn={0020-9910},
   review={\MR{0253348 (40 \#6563)}},
}

\bib{knotinfo}{misc}{
author={Cha, Jae Choon},
author = {Livingston, Charles} ,
title = {KnotInfo: Table of Knot Invariants},
address = {\href{http://www.indiana.edu/~knotinfo}{\tt http://www.indiana.edu/{\textasciitilde}knotinfo}},
}

\bib{CG86}{article}{
   author={Casson, A. J.},
   author={Gordon, C. McA.},
   title={Cobordism of classical knots},
   note={With an appendix by P. M. Gilmer},
   conference={
      title={\`A la recherche de la topologie perdue},
   },
   book={
      series={Progr. Math.},
      volume={62},
      publisher={Birkh\"auser Boston},
      place={Boston, MA},
   },
   date={1986},
   pages={181--199},
   review={\MR{900252}},
}

\end{biblist}
\end{bibdiv}

\end{document}